\documentclass{article}
\usepackage[utf8]{inputenc}

\usepackage[english]{babel}

\usepackage[utf8]{inputenc}
\usepackage[T1]{fontenc}

\usepackage{amssymb}
\usepackage{stmaryrd}
\usepackage{amsmath,amsfonts}
\usepackage[tikz]{bclogo}
\usepackage[top=4.34cm]{geometry}

\usepackage{lmodern}
\usepackage{textcomp}
\usepackage{mathtools, bm}
\usepackage{amssymb, bm}
\usepackage[unq]{unique}
\usepackage[shortlabels]{enumitem}
\usepackage{comment}
\usepackage[breaklinks]{hyperref}

\usepackage[numbers]{natbib}  

\usepackage[breaklinks]{hyperref}
\usepackage[numbers]{natbib}

\usepackage{amsthm}

\usepackage{cleveref}
\usepackage[utf8]{inputenc}
\usepackage[T1]{fontenc}
\usepackage{enumitem}
\usepackage{tipa}
\usepackage{float}
\usepackage{dirtytalk}
\usepackage{pbox}
\usepackage{xcolor}
\usepackage{setspace}
\usepackage[normalem]{ulem}
\usepackage{geometry}
\usepackage{amsmath}
\usepackage{amsthm}
\usepackage{tikz}
\usepackage{amssymb}
\usepackage{multicol}
\usepackage{microtype}
\usepackage{datetime}
\usepackage{endnotes}
\usepackage{babel}
\usepackage[mathscr]{euscript}
\usepackage{bigfoot}
\usetikzlibrary {arrows.meta}
\interfootnotelinepenalty=10000
\usepackage{bbm}

\newtheorem{theorem}{Theorem}[section]
\newtheorem{lemma}[theorem]{Lemma}

\newtheorem{conjecture}[theorem]{Conjecture}
\newtheorem{Prop}[theorem]{Proposition}
\newtheorem{Cor}[theorem]{Corollary}
\newtheorem{question}[theorem]{Question}
\newtheorem{remark}[theorem]{Remark}
\newtheorem{example}[theorem]{Example}

\theoremstyle{definition}

\newcommand{\C}{\mathbb{C}}
\newcommand{\fly}[0]{\mathcal{F}}
\newcommand{\groundset}[1]{\mathbb{Z}_{#1}}
\newcommand{\groundsubs}[2]{\groundset{#1}^{(#2)}}
\newcommand{\hquad}{\hspace{0.5em}}

\DeclareMathOperator{\id}{id}
\DeclareMathOperator{\sgn}{sgn}
\DeclareMathOperator{\Hom}{Hom}

\title{The wreath matrix}
\author{Jan Petr \and Pavel Turek}
\date{}

\begin{document}

\maketitle

\begin{abstract}
    Let $k\leq n$ be positive integers and $\groundset{n}$ be the set of integers modulo $n$. A conjecture of Baranyai from 1974 asks for a decomposition of $k$-element subsets of $\groundset{n}$ into particular families of sets called ``wreaths''. We approach this conjecture from a new algebraic angle by introducing the key object of this paper, the $(n,k)$-wreath matrix $M=M(n,k)$. As our first result, we establish that Baranyai's conjecture is equivalent to the existence of a particular vector in the kernel of $M$. We then employ results from representation theory to study $M$ and its spectrum in detail. In particular, we find all eigenvalues of $M$ and their multiplicities, and identify several families of vectors which lie in the kernel of $M$.  
\end{abstract}

\section{Introduction}
One of the foundational results in the theory of hypergraph decomposition is the following one by Zsolt Baranyai from 1974 \cite{baranyai1974factrization}:

\begin{theorem}\label{thm:hypergraphdecomposition}
Let $n$ and $k$ be two positive integers such that $k\mid n$. Then there exists a partition of the hyperedges of a complete $k$-uniform hypergraph on $n$ vertices into perfect matchings (that is, sets of hyperedges such that each vertex lies in exactly one of these hyperedges). 
\end{theorem}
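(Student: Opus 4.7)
The plan is to follow Baranyai's original iterative-rounding argument. Set $r = n/k$ and $t = \binom{n-1}{k-1}$, so that a valid decomposition corresponds to a $0/1$ matrix $A$ with rows indexed by the $t$ classes and columns by the $k$-subsets of $[n]$, having all row sums equal to $r$ and all column sums equal to $1$, together with a disjointness condition within each row.

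First, I would build a sequence of integer matrices $A_0, A_1, \ldots, A_n$ capturing partial progress. The matrix $A_i$ has rows indexed by the $t$ classes and columns indexed by subsets $S \subseteq [i]$ of size at most $k$; the entry $A_i(c,S)$ is meant to count the ``partial blocks'' in class $c$ whose eventual $k$-set meets $[i]$ exactly in $S$. The invariants are row sums equal to $r$ and column sums equal to $\binom{n-i}{k-|S|}$. The base case $A_0$ has a single column $\emptyset$ with every entry equal to $r$, which is an integer precisely because $k \mid n$.

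To pass from $A_i$ to $A_{i+1}$, each column $S$ splits into two columns $S$ and $S \cup \{i+1\}$, with the fractional update placing a $\binom{n-i-1}{k-|S|-1}/\binom{n-i}{k-|S|}$ fraction of $A_i(c,S)$ into the new column $S \cup \{i+1\}$. This fractional matrix has integer row sums (still $r$) and integer column sums equal to $\binom{n-i-1}{k-|S|-1}$ or $\binom{n-i-1}{k-|S|}$. I would then invoke the classical \textbf{integer rounding lemma}: any real matrix with integer row and column sums admits an integer approximation, entrywise within $1$, having the same row and column sums. This follows from the total unimodularity of bipartite incidence matrices (equivalently, integrality of max-flows). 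Iterating $n$ times produces $A_n$ with columns indexed by $k$-subsets of $[n]$, row sums $r$, column sums $1$, and hence $0/1$ entries.

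The main subtlety, and where I expect the real work to concentrate, is verifying that the $k$-subsets selected in each row of $A_n$ actually form a \emph{perfect matching}, i.e.\ that they are pairwise disjoint and cover $[n]$. This reduces to checking inductively on $i$ that each class $c$'s partial blocks — as encoded by row $c$ of $A_i$ — can be realised by a collection of $r$ pairwise disjoint subsets of $[i]$, each of size at most $k$, whose intersection profile with $[i]$ matches the entries of the row. The column-sum invariants plus the entrywise bound from the rounding lemma force this structure, and the divisibility hypothesis $k \mid n$ enters only in establishing integrality at the base case.
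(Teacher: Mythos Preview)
The paper does not actually prove \Cref{thm:hypergraphdecomposition}; it is stated as a background result of Baranyai with citation, and the only description given is that ``Baranyai's proof \ldots\ is non-constructive. It follows from a more general result proved by induction on $n$ via the integrity theorem of network flows used in each step.'' Your proposal is precisely a sketch of that original argument, so in that sense it agrees with what the paper invokes.

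That said, the place you flag as ``the main subtlety'' is genuinely underspecified, and your proposed resolution (``the column-sum invariants plus the entrywise bound from the rounding lemma force this structure'') does not work as stated. Column sums are global across classes and say nothing about disjointness within a single row; the entrywise rounding bound is likewise insufficient. What Baranyai's argument actually maintains is the stronger invariant that, for every class $c$ and every $j\in[i]$, one has $\sum_{S\ni j} A_i(c,S)=1$; equivalently, row $c$ encodes a partition of $[i]$ into $r$ (possibly empty) blocks. This is preserved for $j\le i$ automatically under any splitting of columns, but for $j=i+1$ it is the \emph{additional} constraint $\sum_{S\subseteq[i]} A_{i+1}(c,S\cup\{i+1\})=1$. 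The fractional split satisfies it because $\binom{n-i-1}{k-|S|-1}/\binom{n-i}{k-|S|}=(k-|S|)/(n-i)$ and, using the partition invariant together with $rk=n$, the per-row total is $\frac{1}{n-i}\sum_S(k-|S|)=\frac{rk-i}{n-i}=1$. Thus the rounding should be applied not to the full $A_{i+1}$ with only its total row sums, but to the increment matrix $B(c,S)=A_{i+1}(c,S\cup\{i+1\})$, which has integer row sums $1$, integer column sums $\binom{n-i-1}{k-|S|-1}$, and integer entrywise bounds $0\le B(c,S)\le A_i(c,S)$; integrality of transportation polytopes then yields the desired integer $A_{i+1}$. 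With this correction the argument goes through.
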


This result finally resolved whether the necessary divisibility condition for hypergraph decompositions into perfect matchings is also a sufficient one --- a natural generalisation of Kirkman's Schoolgirl problem from 1847 \cite{kirkman1847problem}.

At the end of his paper \cite{baranyai1974factrization}, Baranyai conjectures a further generalization of this theorem. To formulate it, we first adapt a notation due to 
Katona \cite{katona1991renyi}.
(This notation differs to the one of Baranyai, who formulated the conjecture in terms of ``staircases''.)

For two positive integers $k \leq n$, let $\groundset{n}$ be the set of integers modulo $n$ and write $\groundsubs{n}{k}$ for the set of subsets of $\groundset{n}$ of size $k$. Consider a permutation of $\groundset{n}$, that is, a bijection $\pi: \groundset{n} \to \groundset{n}$. We define $\fly_{n,k,\pi} \subset \groundsubs{n}{k}$, the
\textit{$(n,k,\pi)$-wreath}
, as 
\[\{\{\pi((i-1)k+1), \pi((i-1)k+2), \ldots, \pi(ik)\}\,|\, i\in \groundset{n}\}.\]
Alternatively, $\fly_{n,k,\pi}$ can be constructed in steps as follows. We start with $\fly_{n,k,\pi}=\emptyset$. First, we add into $\fly_{n,k,\pi}$ the set of $k$ elements in the first interval of length $k$ in permutation $\pi$. Then, we iteratively keep adding the set formed by the $k$ consecutive elements in $\pi$, starting right after the last interval. We stop once we are about to add the first interval of length $k$ again. See an example in \Cref{fig:wreaths}. When $n$ and $k$ are clear from the context, we simply write $\fly_{\pi}$ instead of $\fly_{n,k,\pi}$.

\begin{figure}[h]\centering
    			\includegraphics[height=5.5 cm]{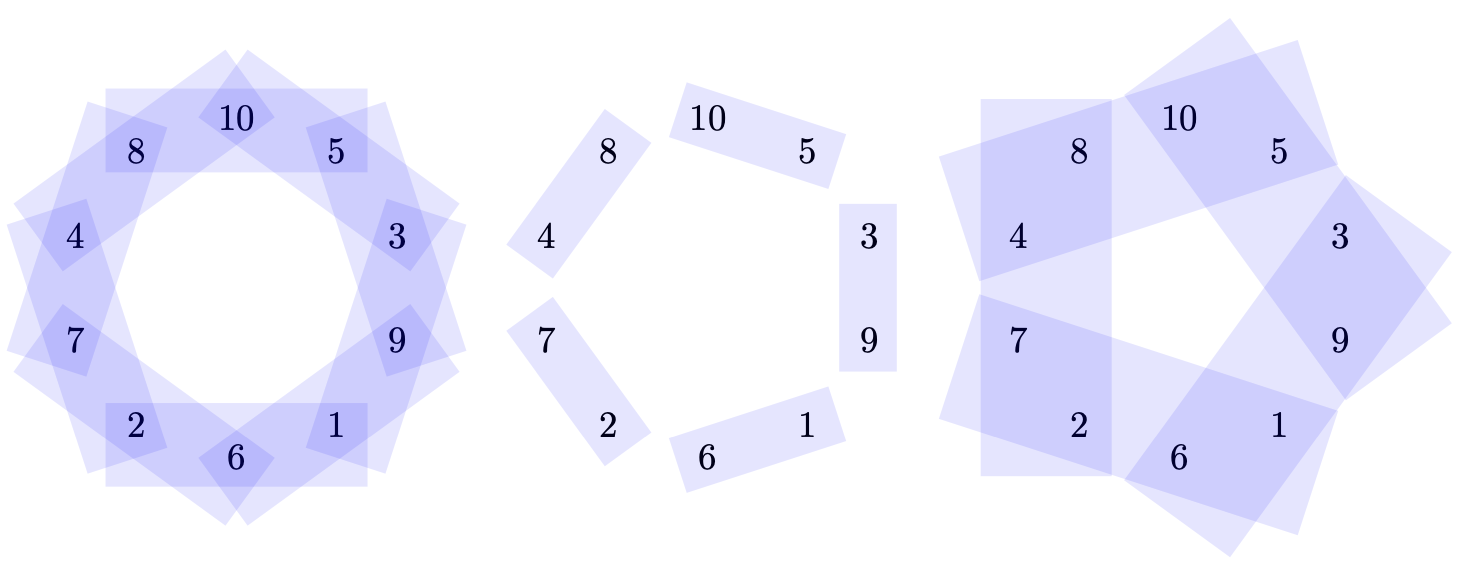}
\caption{The $(n,k,\pi)$-wreaths for $n=10$, $\pi=(10\hquad5\hquad3\hquad9\hquad1\hquad6\hquad2\hquad7\hquad4\hquad8)$ and $k\in \{3,2,4\}$.}
\label{fig:wreaths}
\end{figure}

A set $\fly\subset \groundsubs{n}{k}$ is called an \textit{$(n,k)$-wreath} if it is an $(n,k,\pi)$-wreath for some permutation $\pi$ of $\groundset{n}$, and we write $\mathcal{W}_{n,k}$ for the set of $(n,k)$-wreaths $\fly\subset \groundsubs{n}{k}$. Baranyai's ``wreath conjecture'' \cite{baranyai1974factrization} is as follows:

\begin{conjecture}\label{conj:wreathconj}
It is possible to decompose $\groundsubs{n}{k}$ into disjoint $(n,k)$-wreaths.
\end{conjecture}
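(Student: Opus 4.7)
The plan is to attack Conjecture \ref{conj:wreathconj} through the algebraic framework of the wreath matrix $M = M(n,k)$ announced in the introduction. My first step is to make the advertised equivalence completely explicit: with rows indexed by wreaths and columns by $k$-subsets, a decomposition of $\groundsubs{n}{k}$ corresponds to a $\{0,1\}$-vector $x$ on wreaths satisfying $x^{T} M = \mathbf{1}^{T}$. After subtracting any fractional witness for $\mathbf{1}$, this becomes the problem of finding a $\{0,1\}$-valued element inside a specific coset of $\ker M$, and so the whole conjecture is reduced to the existence of a single vector with prescribed shape in $\ker M$.

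Next, I would exploit the representation theory of $S_n$. Relabelling $\groundset{n}$ permutes both wreaths and $k$-subsets and commutes with $M$, so $M^{T} M$ is an $S_n$-equivariant endomorphism of the permutation module $\Q^{\groundsubs{n}{k}}$. By Young's rule this module is multiplicity-free, decomposing as $\bigoplus_{i=0}^{k} S^{(n-i,i)}$, and Schur's lemma then forces $M^{T} M$ to act by a scalar $\mu_i$ on each summand $S^{(n-i,i)}$. Each $\mu_i$ can be computed by applying $M^{T} M$ to a single explicit Specht vector and reading off the coefficient; this is the spectrum result that the paper promises, and it identifies $\ker M$ as the sum of those $S^{(n-i,i)}$ with $\mu_i = 0$.

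With $\ker M$ described algebraically, the third step is to produce a $\{0,1\}$-vector in the correct coset. A natural construction is via cyclic (or more generally $H$-orbit) families: given a subgroup $H \leq S_n$, for instance $\langle i \mapsto i+1\rangle$, whose action on $\groundsubs{n}{k}$ distributes $k$-subsets uniformly, the sum of incidence vectors over the $H$-orbit of a single wreath $\fly_\pi$ is automatically a scalar multiple of $\mathbf{1}$. One would then combine a small number of such orbit sums, correcting by explicit elements of $\ker M$ identified in the previous step, to assemble the required decomposition.

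The main obstacle, and almost certainly the reason the conjecture has resisted since 1974, is the final passage from rational to $\{0,1\}$. Representation theory can place $\mathbf{1}$ inside the rational row span of $M$ and describe $\ker M$ very precisely, but it says nothing about whether the relevant coset contains a nonnegative, let alone $\{0,1\}$, vector. My expectation is that pure algebra will not suffice in isolation: progress will come from coupling the representation-theoretic control of $\ker M$ with a combinatorial ingredient, such as an absorber, an iterative rounding of a fractional decomposition, or an explicit cyclic family verified directly in small cases and then lifted to general $n$. Any such coupling is the real theorem hiding behind Conjecture \ref{conj:wreathconj}.
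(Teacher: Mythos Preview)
This is not a proof, and you say so yourself: the final paragraph concedes that the passage from a rational solution to a $\{0,1\}$-solution is ``the main obstacle'' and that ``pure algebra will not suffice.'' That is exactly where the paper also stops. Conjecture~\ref{conj:wreathconj} is stated as an open problem; the paper does not prove it, and neither does your outline. What you have sketched is essentially the programme the paper itself carries out: your reduction to a vector of prescribed shape in $\ker M$ is Proposition~\ref{prop:tfae}; your spectral decomposition via Young's rule and Schur's lemma is Sections~\ref{se:reptheory}--\ref{se:evalues} and Theorem~\ref{thm:matrixresults}; and your orbit construction of kernel elements is Lemma~\ref{le:vectors in kernel}. (One minor discrepancy: you describe $M$ as a wreath-by-subset incidence matrix and study $M^{T}M$ on the subset side, whereas the paper's $M$ is the square wreath-by-wreath matrix $M_{\fly,\fly'}=|\fly\cap\fly'|$; these are dual formulations, since the paper's $M$ equals $NN^{T}$ for your incidence matrix $N$, and the two have the same nonzero spectrum.)

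So the proposal correctly identifies the algebraic landscape but does not close the gap it names, and that gap \emph{is} the entire conjecture. Ending with ``coupling the representation-theoretic control of $\ker M$ with a combinatorial ingredient, such as an absorber, an iterative rounding \ldots'' is a description of where the difficulty lies, not a resolution of it.
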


If $n$ and $k$ are clear from context, we will shorten ``$(n,k)$-wreath'' into ``\emph{wreath}''.  One parameter influencing the properties of $(n,k)$-wreaths is the greatest common divisor of $n$ and $k$. From the point of computations that appear later in the paper, the case $k\mid n$ sometimes needs to be dealt with separately because such wreaths do not contain overlapping sets of size $k$. This case is also significant for the following reason.

\begin{remark}\label{re:k divides n}
\Cref{thm:hypergraphdecomposition} is a corollary of \Cref{conj:wreathconj} for $k\mid n$.
\end{remark}

Baranyai's proof of \Cref{thm:hypergraphdecomposition} is non-constructive. It follows from a more general result proved by induction on $n$ via the integrity theorem of network flows used in each step. The additional structure of wreaths, however, resists direct generalisations of the proof.

In this paper, we instead approach the wreath conjecture from a new angle. We do so by introducing and studying the \textit{$(n,k)$-wreath matrix} $M=M(n,k)$ which is a square matrix with columns and rows labelled by wreaths from $\mathcal{W}_{n,k}$, such that $M_{\fly, \fly'} = |\fly\cap \fly'|$. Once again, if $n$ and $k$ are clear from context, we shorten ``$(n,k)$-wreath matrix'' into ``wreath matrix''. First, we show that the $(n,k)$-wreath matrix is closely related to \Cref{conj:wreathconj}.

\begin{Prop}\label{prop:tfae}
    Let $k\leq n$ be two integers and let $g=\gcd(n,k)$. The following are equivalent.
    \begin{enumerate}[(a)]
        \item \Cref{conj:wreathconj} holds for $n$ and $k$.
        \item There is $v\in\ker M$ whose entries take only two distinct values, one of which is attained $\frac{g\binom{n}{k}}{n}$-times.
        \item If $k \nmid n$, there is $v \in\ker M$ with at most $\frac{g\binom{n}{k}}{n}$ positive entries and less than $\frac{(n-k)!k!}{2(g!)^{n/g}}$ zeros.

        If $k\mid n$, there is $v \in\ker M$ with at most ${\binom{n-1}{k-1}}$ positive entries and less than $\frac{n(n-k)!}{k(k!)^{n/k-1}(\frac{n}{k})!}$ zeros.
        \item There is $v\in\ker M$ with at most $\frac{g\binom{n}{k}}{n}$ positive entries and such that each subset of $\groundset{n}$ of size $k$ is contained in some wreath $\fly$ which labels a negative entry of $v$.
    \end{enumerate}
\end{Prop}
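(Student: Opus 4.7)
The plan is to reformulate everything via the wreath--subset incidence matrix and then close the cycle (a) $\Rightarrow$ (b) $\Rightarrow$ (c) $\Rightarrow$ (d) $\Rightarrow$ (a). Let $A$ be the $|\mathcal{W}_{n,k}|\times\binom{n}{k}$ matrix with $A_{\fly,S}=1$ exactly when $S\in\fly$. A direct calculation yields $M=AA^{T}$, whence $\ker M=\ker A^{T}$; equivalently, $v\in\ker M$ if and only if $\sum_{\fly\ni S}v_{\fly}=0$ for every $S\in\groundsubs{n}{k}$. By the $S_n$-transitivity on $k$-subsets, each $S$ lies in the same number $r$ of wreaths, and a stabilizer count (sketched below) identifies $r$ with the exact numerical bound appearing in (c) in both parity cases. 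Also $A\mathbf{1}=(n/g)\mathbf{1}_{\mathcal{W}_{n,k}}$, so $\mathbf{1}\in\mathrm{im}(A)\perp\ker A^{T}$ and every $v\in\ker M$ satisfies $\sum_{\fly}v_{\fly}=0$.

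For (a) $\Rightarrow$ (b), given a decomposition $D=\{\fly_{1},\ldots,\fly_{N}\}$ with $N=g\binom{n}{k}/n$, I would take $v=r\mathbf{1}_{D}-\mathbf{1}_{\mathcal{W}_{n,k}}$. Since each $S$ sits in exactly one $\fly\in D$ and in $r$ wreaths overall, $\sum_{\fly\ni S}v_{\fly}=r-r=0$, so $v\in\ker M$; the entries of $v$ take exactly the two values $r-1$ (attained $N$ times) and $-1$. For (b) $\Rightarrow$ (c), if $v\in\ker M$ takes two distinct values $a,b$ with $a$ attained $N$ times and $b$ attained $|\mathcal{W}_{n,k}|-N$ times, the identity $\mathbf{1}^{T}v=0$ gives $Na+(|\mathcal{W}_{n,k}|-N)b=0$, and this together with $a\neq b$ and the (non-degenerate) hypothesis $|\mathcal{W}_{n,k}|>N$ forces $a,b$ nonzero and of opposite signs. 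Negating $v$ if necessary, $v$ has exactly $N$ positive entries and no zeros, more than sufficient for (c).

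For (c) $\Rightarrow$ (d), suppose for contradiction some $k$-subset $S$ is contained in no wreath labelling a negative entry of $v$. Then every $\fly\ni S$ has $v_{\fly}\geq 0$, and since their sum is zero they all vanish; this puts at least $r$ wreaths into the zero-support of $v$, contradicting the bound ``less than $r$''. Hence every $S$ is contained in a wreath with negative $v$-entry, which is (d). For (d) $\Rightarrow$ (a), let $P$ be the positive support of $v$. The kernel condition together with the existence of a negative wreath through any $S$ forces at least one positive wreath through $S$, so $p(S):=|\{\fly\in P:S\in\fly\}|\geq 1$. Summing,
\[
\binom{n}{k}\leq\sum_{S}p(S)=\sum_{\fly\in P}|\fly|=|P|\cdot n/g\leq N\cdot n/g=\binom{n}{k},
\]
so equality holds throughout: $p\equiv 1$, $|P|=N$, and $P$ is the desired decomposition of $\groundsubs{n}{k}$ into $(n,k)$-wreaths.

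The main technical obstacle is the identification of $r$ with the explicit formulas in (c), which requires computing $|\mathcal{W}_{n,k}|$, equivalently the $S_n$-stabilizer of a wreath. The count genuinely splits: when $k\mid n$ the stabilizer is the perfect-matching group $S_k\wr S_{n/k}$ of order $(k!)^{n/k}(n/k)!$, whereas when $k\nmid n$ the overlapping blocks force the local part of the stabilizer to be only $(g!)^{n/g}$ (permutations within the $n/g$ ``atoms'' of size $g$ cut out by the overlap tiling), twisted by a dihedral action of order $2n/g$ on the cyclic arrangement of intervals. Once these two stabilizer counts are in hand, everything else in the proof is short double-counting and orthogonality as above.
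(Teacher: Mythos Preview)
Your proof is correct and follows essentially the same route as the paper: both reduce membership in $\ker M$ to orthogonality against the vectors $w_T$ (your $M=AA^{T}$ is exactly the paper's $M=\sum_T M_T$ with $M_T=w_Tw_T^{T}$), and then run the same cycle of implications via the same counting and sign arguments. The only cosmetic difference is that the paper obtains the vector in (b) from the equality case of the Delsarte--Hoffman bound, whereas you write it down directly as $r\mathbf{1}_{D}-\mathbf{1}$; the paper itself notes in a footnote that your direct route works just as well, and the stabilizer computations you sketch for $r$ are exactly those the paper carries out before the proof.
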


Our main tools used to analyse the wreath matrix are results from the representation theory of the symmetric groups. Such results have already proved fruitful in studying transitive graphs; see, for instance, \cite[Section~13]{GodsilMeagherEKR15} which presents a machinery for finding the spectrum of several transitive graphs including the Kneser graph. Note that this machinery does not apply to our setting as, in general, the action of the symmetric group on the set of wreaths is not multiplicity-free.

The spectrum of the wreath matrix is described in the main result of this paper.

\begin{theorem}\label{thm:matrixresults}
    Let $k<\frac{n}{2}$ be two positive integers and let $g=\gcd(n,k)$. The $(n,k)$-wreath matrix has $k+1$ eigenvalues $$\lambda_1 \geq \lambda_2,\lambda_3, \ldots, \lambda_k >\lambda_0=0,$$
    all of which are integers. The multiplicity of $\lambda_1$ is $1$. The multiplicity of $\lambda_l$ is $\binom{n}{l}-\binom{n}{l-1}$ for $2 \leq l \leq k$. The value of $\lambda_1$ is $\frac{n (n-k)!k!}{2g(g!)^{n/g}}$ if $k\nmid n$ and $\frac{n^2(n-k)!}{k^2(k!)^{n/k-1}(\frac{n}{k})!}$ if $k\mid n$.
    If $g=1$, for $2\leq l \leq k$ we moreover have 
    \[
    \lambda_l = (-1)^l \frac{1}{2}l! k! (n-k-l)! (n-2k-1)\binom{k}{l} -\frac{1}{2}k! (n-k)! + l! \binom{k+1}{l+1}\sum_{j=0}^l (-1)^{l-j} (k-j)!(n-k-l+j)!.
    \]
\end{theorem}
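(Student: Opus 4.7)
The plan is to exploit the factorization $M = A A^{T}$, where $A$ is the $\mathcal{W}_{n,k} \times \groundsubs{n}{k}$ incidence matrix with $A_{\fly,S} = 1$ if $S \in \fly$ and $0$ otherwise. A direct computation gives $(AA^{T})_{\fly,\fly'} = |\fly \cap \fly'| = M_{\fly,\fly'}$, so the nonzero spectrum of $M$ coincides (with multiplicity) with that of the smaller matrix $A^{T} A$ acting on $\mathbb{C}^{\groundsubs{n}{k}}$. Because $A^{T} A$ is $S_{n}$-equivariant and $S_{n}$ acts transitively on pairs of $k$-subsets with any prescribed intersection size, $(A^{T} A)_{S,S'}$ depends only on $|S \cap S'|$. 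Hence $A^{T} A = \sum_{i=0}^{k} c_{i} J_{i}$ lies in the Bose--Mesner algebra of the Johnson scheme, where $(J_{i})_{S,S'}$ is $1$ if $|S \cap S'| = i$ and $0$ otherwise, and $c_{i}$ is the number of wreaths containing a fixed pair of $k$-subsets with intersection of size $i$.

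The permutation representation $\mathbb{C}^{\groundsubs{n}{k}}$ of $S_{n}$ decomposes multiplicity-freely as $\bigoplus_{l=0}^{k} S^{(n-l,l)}$ (valid since $k \leq n-k$), and the $J_{i}$ are simultaneously diagonalized on this decomposition with eigenvalues given by the Eberlein polynomials $E_{l}(i)$. By Schur's lemma, $A^{T} A$ therefore acts as a scalar $\lambda_{l} = \sum_{i=0}^{k} c_{i} E_{l}(i)$ on each summand, accounting for multiplicities $\dim S^{(n-l,l)} = \binom{n}{l} - \binom{n}{l-1}$ for $l \geq 1$ (and $1$ for the trivial summand $S^{(n)}$). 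To pin down $\lambda_{1}$ (the scalar on $S^{(n)}$), apply $A^{T} A$ to the all-ones vector $\mathbf{1}$: since every wreath contains exactly $n/g$ distinct $k$-arcs, $A\mathbf{1} = (n/g)\mathbf{1}_{\mathcal{W}}$, and by symmetry $|\{\fly \ni S\}| = (n/g) |\mathcal{W}_{n,k}|/\binom{n}{k}$, whence $\lambda_{1} = (n/g)^{2} |\mathcal{W}_{n,k}|/\binom{n}{k}$; an orbit--stabilizer count of $|\mathcal{W}_{n,k}|$ in each of the two cases $k \nmid n$ and $k \mid n$ then recovers the two stated closed forms. The eigenvalue on the $S^{(n-1,1)}$ isotypic component is forced to vanish: for $v_{i}(S) = 1$ if $i \in S$ and $0$ otherwise, $(Av_{i})_{\fly}$ counts the arcs of $\fly$ containing $i$, which equals $k/g$ independently of $i$ and $\fly$, so $A(v_{i} - v_{j}) = 0$ and the entire copy of $S^{(n-1,1)}$ spanned by these differences lies in $\ker A \subseteq \ker M$. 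This identifies $\lambda_{0} = 0$.

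For the remaining eigenvalues $\lambda_{l}$ with $2 \leq l \leq k$ under the hypothesis $g = 1$, one must compute $c_{i}$ explicitly and evaluate the Eberlein convolution. When $g = 1$ the arcs of any wreath are the $n$ distinct cyclic intervals of length $k$ on $\mathbb{Z}_{n}$ (relabelled by a permutation $\pi$), and the intersection size $|A_{i} \cap A_{j}|$ is determined by $(j - i) k \bmod n$; this reduces $c_{i}$ to a concrete combinatorial count of how many wreaths realize two prescribed $k$-subsets as arcs at given cyclic positions. Substituting into $\lambda_{l} = \sum_{i} c_{i} E_{l}(i)$ and simplifying the resulting alternating sum of products of factorials and binomial coefficients should produce the three-term expression in the theorem. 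Positivity of each $\lambda_{l}$ for $2 \leq l \leq k$ then follows from positive semidefiniteness of $A^{T} A$ together with the explicit formula, and integrality is automatic since $A^{T} A$ is an integer matrix that commutes with the $S_{n}$-action. The main obstacle is the delicate enumeration step: carrying out the arc-overlap case analysis for the $c_{i}$, and then simplifying the convolution $\sum_{i} c_{i} E_{l}(i)$ into the stated closed form involving the alternating sum $\sum_{j=0}^{l} (-1)^{l-j} (k-j)! (n-k-l+j)!$.
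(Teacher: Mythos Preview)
Your approach via the factorization $M = AA^{T}$ and studying $A^{T}A$ in the Bose--Mesner algebra of the Johnson scheme is sound and genuinely different from the paper's route. The paper works on the wreath side: it defines the $\mathbb{C}S_n$-map $\Phi: V_k \to V$ (which coincides with your $A$), shows $\ker\Phi = U_1$, and then computes each $\lambda_l$ by letting $M$ act on a generator $w_A \in W_l = \Phi(V_l)$ modulo $W_{l-1}$. This yields $\lambda_l = \sum_{j=0}^l (-1)^{l-j} \binom{l}{j} b_{l,j}$ where $b_{l,j} = \langle w_A, w_T\rangle$ with $|A|=l$, $|T|=k$, $|A\cap T|=j$. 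The key device is that one of the two sets in the inner product has size $l$ rather than $k$; this is what makes the enumeration of $b_{l,j}$ tractable and produces the stated three-term closed form directly. Your coefficients $c_i = \langle w_S, w_{S'}\rangle$ with both sets of size $k$ are related to the $b_{l,j}$ by a triangular change of basis, so the Eberlein route works in principle, but the convolution $\sum_i c_i E_l(i)$ is less likely to collapse as cleanly into the specific formula. On the other hand, your treatment of the multiplicities, of $\lambda_1$, and of the vanishing on $S^{(n-1,1)}$ is at least as clean as the paper's.

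One genuine gap: positive semidefiniteness of $A^{T}A$ gives only $\lambda_l \geq 0$, whereas the strict inequality $\lambda_l > 0$ for $2 \leq l \leq k$ is asserted for \emph{all} $g$, not just $g=1$. Reading positivity off the explicit formula is neither available for $g>1$ nor obviously tractable even for $g=1$. What is actually required is that $S^{(n-l,l)} \not\subseteq \ker A$, and the paper supplies this via a separate combinatorial lemma: it exhibits a specific wreath $\fly'$ for which the coefficient of $e_{\fly'}$ in $A t_l = \Phi(t_l)$ (with $t_l$ the standard polytabloid generating $U_l$) equals $\pm 1$. Your plan needs an analogous ingredient. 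A minor point: ``integer matrix'' alone does not force integer eigenvalues; the correct justification is that $A^{T}A$ is an integer combination of the $J_i$ and the Eberlein values $E_l(i)$ are themselves integers.
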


We stress that the theorem does not say that the $k+1$ eigenvalues are pairwise distinct (see \Cref{qu:distinct}). If some of them are equal, then the multiplicity of an eigenvalue $\lambda$ simply equals the sum of the multiplicities of $\lambda_l$ from the theorem such that $\lambda_l = \lambda$.

We remark that the assumption $k<\frac{n}{2}$ is not restrictive. Indeed, if $k>\frac{n}{2}$, then the bijection between $\mathcal{W}_{n,k}$ and $\mathcal{W}_{n,n-k}$ that replaces all sets in a wreath by their complements preserves the wreath matrix. And for $k=\frac{n}{2}$ wreaths are pairs of complement sets; hence in this case the wreath matrix is twice the identity matrix. The bijection above and the simple structure of wreaths for $k=\frac{n}{2}$ also allow us to restrict to $k<\frac{n}{2}$ in \Cref{conj:wreathconj}.

In fact, our results provide closed formulas for the eigenvalues of the wreath matrix for all $g$. They can be obtained from \Cref{le:lambdaformula}, \Cref{pr:eig_k_div_n} and the formulas in \Cref{ap:coefficients}.
For fixed $k$ and $l$ the eigenvalue $\lambda_l$ divided by $(n-k-l)!$ is a polynomial in $n$ provided $k$ and $n$ are coprime. In \Cref{pr:root} we see that this polynomial has always at least one integral root. Based on these surprising results, the authors wonder if there are further significances of these polynomials.

In light of \Cref{re:k divides n}, the hardest case of \Cref{conj:wreathconj} seems to be $g=1$. It matches the respective case of Bailey--Stevens conjecture about decompositions of complete $k$-uniform hypergraphs into tight Hamiltonian cycles \cite{BAILEY20103088}. Despite the efforts of multiple researchers \cite{BAILEY20103088,huo2015jirimutu, meszka2009decomposing, turek2024intervals, SAENGCHAMPA2024114197, xu2002hamiltonian}, both the wreath conjecture and Bailey--Stevens conjecture are still widely open. Even for $k=3$, they have been confirmed only for a few families of positive integers $n$. We conclude the brief historical overview by mentioning some of the extensions of \Cref{thm:hypergraphdecomposition} in other directions: \cite{bahmanian2014connected, he2022non, katona2023baranyai, xi2024variants}.

The paper is organised as follows. In \Cref{se:matrix} we present the motivation behind the introduction of the wreath matrix, establish its basic properties and prove \Cref{prop:tfae}. In \Cref{se:reptheory} we introduce the necessary background from the representation theory of the symmetric groups and apply it to find multiplicities of the eigenvalues of the wreath matrix. We compute these eigenvalues and prove \Cref{thm:matrixresults} in \Cref{se:evalues}. Finally, we analyse the kernel of the wreath matrix in \Cref{se:kernel}.

\section{The wreath matrix}\label{se:matrix}

Throughout the rest of the paper $n$ and $k$ are positive integers and $g=\gcd(n,k)$. From the definition of $(n,k)$-wreaths, we see that each wreath in $\mathcal{W}_{n,k}$ contains $n/g$ sets. Thus the collection of wreaths required in \Cref{conj:wreathconj} has size $\frac{g\binom{n}{k}}{n}$. 

We can rephrase \Cref{conj:wreathconj} in terms of the largest independent set of a well-chosen graph $G_{\mathcal{W}_{n,k}}$. The vertex set of $G_{\mathcal{W}_{n,k}}$ is $\mathcal{W}_{n,k}$ and we put an edge between two distinct wreaths if they are not disjoint.

Then \Cref{conj:wreathconj} is true if and only if there is an independent set of size $\frac{g\binom{n}{k}}{n}$ in $G_{\mathcal{W}_{n,k}}$. Indeed, by a counting argument, a set of $\frac{g\binom{n}{k}}{n}$ wreaths partitions $\groundsubs{n}{k}$ if and only if its wreaths are pairwise disjoint, that is they form an independent set in $G_{\mathcal{W}_{n,k}}$. Moreover, such an independent set would be the largest, again, by a counting argument.

The introduction of the wreath matrix is motivated by the Delsarte--Hoffman upper bound for the size of an independent set. The Delsarte--Hoffman bound has been widely used in extremal combinatorics, for instance, in Lovász's proof \cite{lovasz1979shannon} of the Erd\H{o}s--Ko--Rado theorem \cite{erdos1961intersection}. To state it we need the following definition. 

By a \emph{pseudoadjacency matrix of a graph $G$} we mean any real matrix $\widetilde{M}$ indexed by $V(G)$ which
\begin{itemize}
\item is symmetric, and 
\item satisfies $\widetilde{M}_{u,v}=0$ whenever $\{u,v\} \notin E(G)$, and
\item whose row-sums are equal and positive (and therefore $f=(1,\ldots,1)$ is an eigenvector).
\end{itemize}
The Delsarte--Hoffman bound (as stated, e.g., in \cite{ellis2021intersection}) is as follows:

\begin{theorem}\label{thm:DelsarteHoffman}
Let $G$ be a graph on $N$ vertices and let $\widetilde{M}$ be a pseudoadjacency matrix of $G$. Let $\lambda_1$ be the eigenvalue
of $\widetilde{M}$ corresponding to the eigenvector $f= (1, 1, \ldots, 1)$, and let $\lambda_{\min}$ be the minimum eigenvalue of $\widetilde{M}$. Let $S \subset V$ be an independent
set of vertices of $G$. Then

$$|S|\leq \frac{-\lambda_{\min}}{\lambda_{1}-\lambda_{\min}}N.$$

Equality holds if and only if

$$1_{S}-\frac{|S|}{N}f$$

is a $\lambda_{\min}$-eigenvector of $\widetilde{M}$, where $1_S$ denotes the indicator vector of $S$.
\end{theorem}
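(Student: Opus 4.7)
The plan is the standard Delsarte--Hoffman spectral argument: compute the quadratic form $1_S^T\widetilde{M}1_S$ in two different ways and compare them. Since $\widetilde{M}$ is symmetric, the spectral theorem gives an orthonormal eigenbasis of $\mathbb{R}^N$; the normalised all-ones vector $f/\sqrt{N}$ is one such eigenvector, with eigenvalue $\lambda_1$. I would decompose the indicator vector orthogonally as $1_S=\alpha f+w$ with $w\perp f$; projecting onto $f$ gives $\alpha=\langle 1_S,f\rangle/\|f\|^2=|S|/N$, and the Pythagorean identity yields $\|w\|^2=|S|-|S|^2/N=|S|(N-|S|)/N$.

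The two computations are the following. Combinatorially, the hypothesis $\widetilde{M}_{u,v}=0$ whenever $\{u,v\}\notin E(G)$ forces both the off-diagonal entries on non-edges and the diagonal entries (since $\{u,u\}$ is not an edge of $G$) to vanish, so that, because $S$ is independent,
\[1_S^T\widetilde{M}1_S\;=\;\sum_{u,v\in S}\widetilde{M}_{u,v}\;=\;0.\]
Spectrally, using $\widetilde{M}f=\lambda_1 f$ together with $w\perp f$, the cross-terms drop out and
\[1_S^T\widetilde{M}1_S\;=\;\alpha^2\lambda_1 N+w^T\widetilde{M}w\;\geq\;\alpha^2\lambda_1 N+\lambda_{\min}\|w\|^2,\]
the inequality being the Rayleigh-quotient lower bound applied to $w$, valid because every eigenvalue of $\widetilde{M}$ is at least $\lambda_{\min}$.

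Equating the two expressions, substituting the values of $\alpha$ and $\|w\|^2$, and rearranging (assuming $\lambda_1>\lambda_{\min}$, without which the bound is vacuous) gives precisely $|S|\leq -\lambda_{\min}N/(\lambda_1-\lambda_{\min})$. The only step with any slack is the Rayleigh inequality, and expanding $w$ in an orthonormal eigenbasis of $f^\perp$ shows that equality holds if and only if $w$ is supported on the $\lambda_{\min}$-eigenspace, i.e.\ $1_S-\tfrac{|S|}{N}f$ is itself a $\lambda_{\min}$-eigenvector, as claimed. The main obstacle here is not technical but expository: carefully tracking the equality case (and the borderline situations where $w=0$ or $\lambda_1=\lambda_{\min}$) so that the biconditional matches the precise form given in the theorem.
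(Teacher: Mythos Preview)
The paper does not supply its own proof of this theorem: it is quoted as a known result (``as stated, e.g., in \cite{ellis2021intersection}'') and then applied. Your argument is the standard Delsarte--Hoffman spectral computation and is correct, including the observation that the definition of a pseudoadjacency matrix forces the diagonal to vanish, so that $1_S^T\widetilde{M}1_S=0$ for an independent set $S$. There is nothing to compare against here.
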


Analogously to the case of adjacency matrices, one gets the following:

\begin{remark}\label{re:max evalue}
    If all entries of $\widetilde{M}$ are non-negative, the maximum eigenvalue of $\widetilde{M}$ is $\lambda_1$.
\end{remark}

For our graph $G_{\mathcal{W}_{n,k}}$, we can obtain a pseudoadjacency matrix $\widetilde{M}$ from the wreath matrix $M$ by setting $\widetilde{M} = M - \frac{n}{g}I$. As $G_{\mathcal{W}_{n,k}}$ is a transitive graph, the row-sums of $\widetilde{M}$ are equal, and since it is connected and $\widetilde{M}$ is non-negative, the row-sums are positive. Then it is easy to see that $\widetilde{M}$ is indeed a pseudoadjacency matrix of $G_{\mathcal{W}_{n,k}}$.

Inspired by \Cref{thm:DelsarteHoffman} (and \Cref{prop:tfae}) we would like to understand the spectrum of $\widetilde{M}$. As $M$ and $\widetilde{M}$ differ only by a scalar matrix, we can study the spectrum of $M$ instead. We start by computing the largest eigenvalue $\lambda_1$ of $M$.

To achieve this, we introduce an action of the symmetric group on $\mathcal{W}_{n,k}$. Throughout the paper we write $S_n$ for the symmetric group of permutations of $\groundset{n}$ with its identity denoted as $\id$. It is easy to see that there is a well-defined action of $S_n$ on $\mathcal{W}_{n,k}$ given by $\tau \fly_{\pi} = \fly_{\tau\pi}$ for any permutations $\tau$ and $\pi$ in $S_n$. The stabiliser of $\fly_{\id}$ consist of permutations which preserve the set partition $\groundset{n}=\cup_{i=1}^{\frac{n}{g}} P_i$, where $P_i=\{ (i-1)g + 1, (i-1)g + 2, \dots, ig \}$, and act on the indices of $P_i$ as elements of the dihedral group $D_{\frac{2n}{g}}$ if $k\nmid n$ or as elements of the symmetric group $S_{\frac{n}{k}}$ if $k \mid n$. Thus, if $k \nmid n$, the stabiliser has size $\frac{2n(g!)^{n/g}}{g}$ and is isomorphic to the \emph{wreath product} $S_g\wr D_{\frac{2n}{g}}$. If $k \mid n$, the stabiliser has size $(k!)^{n/k}(\frac{n}{k})!$ and is isomorphic to $S_k\wr S_{\frac{n}{k}}$.
By the orbit-stabiliser theorem, the size of $\mathcal{W}_{n,k}$ is $\frac{g(n-1)!}{2(g!)^{n/g}}$ if $k \nmid n$ or $\frac{n!}{(k!)^{n/k}(\frac{n}{k})!}$ if $k \mid n$.

\begin{lemma}\label{le:lambda1}
    The largest eigenvalue of $M$ is $\frac{n (n-k)!k!}{2g(g!)^{n/g}}$ when $k \nmid n$ or $\frac{n^2(n-k)!}{k^2(k!)^{n/k-1}(\frac{n}{k})!}$ when $k \mid n$.
\end{lemma}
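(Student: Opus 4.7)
The plan is to combine \Cref{re:max evalue} with a double-counting argument. Since $M$ has non-negative integer entries and the graph $G_{\mathcal{W}_{n,k}}$ is transitive (and hence connected), \Cref{re:max evalue} tells us that the largest eigenvalue of $M$ coincides with $\lambda_1$, the common row-sum of $M$, attained on the all-ones eigenvector. Thus it suffices to evaluate, for any fixed wreath $\fly$,
\[
\lambda_1 \;=\; \sum_{\fly' \in \mathcal{W}_{n,k}} |\fly\cap \fly'| \;=\; \sum_{S \in \fly} \bigl|\{\fly' \in \mathcal{W}_{n,k}\,:\, S\in \fly'\}\bigr|.
\]

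First I would check that the row-sums are indeed all equal. This is a direct consequence of the transitivity of the $S_n$-action on $\mathcal{W}_{n,k}$ already set up in the excerpt, since $\tau\fly\cap\tau\fly' = \tau(\fly\cap\fly')$ implies $|\tau\fly\cap\tau\fly'|=|\fly\cap\fly'|$, so the row indexed by $\fly$ is just a permutation of the row indexed by $\tau\fly$.

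Next I would count $N_S := |\{\fly' \in \mathcal{W}_{n,k}\,:\, S \in \fly'\}|$ for a fixed $S \in \groundsubs{n}{k}$. Because $S_n$ acts transitively on $\groundsubs{n}{k}$, the number $N_S$ is independent of $S$, so it can be extracted via
\[
\sum_{S \in \groundsubs{n}{k}} N_S \;=\; \sum_{\fly' \in \mathcal{W}_{n,k}} |\fly'| \;=\; \frac{n}{g}\,|\mathcal{W}_{n,k}|,
\]
giving $N_S = \frac{n\,|\mathcal{W}_{n,k}|}{g\binom{n}{k}}$. Since every wreath has $n/g$ sets, we obtain $\lambda_1 = (n/g)\cdot N_S = \frac{n^2 |\mathcal{W}_{n,k}|}{g^2\binom{n}{k}}$.

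Finally, the two cases of the lemma are obtained by plugging in the two formulas for $|\mathcal{W}_{n,k}|$ derived in the paragraph above the statement via the orbit–stabiliser theorem (namely $\frac{g(n-1)!}{2(g!)^{n/g}}$ if $k \nmid n$ and $\frac{n!}{(k!)^{n/k}(\frac{n}{k})!}$ if $k\mid n$, noting that $g=k$ in the latter) and simplifying. There is no real obstacle here: the argument is essentially a transitivity-plus-double-counting exercise, and the only place one must be careful is the bookkeeping of factorials in the $k\mid n$ case, where $n/g = n/k$ replaces $n/g$ and the wreath-size factor combines with $\binom{n}{k}^{-1}$ to produce the $k^2$ in the denominator.
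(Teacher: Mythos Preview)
Your proposal is correct and follows essentially the same route as the paper: both arguments identify $\lambda_1$ with the common row-sum, rewrite that sum as $\sum_{T\in\fly} N_T$ with $N_T=|\{\fly':T\in\fly'\}|$, observe that $N_T$ is independent of $T$, and multiply by $|\fly|=n/g$. The only minor difference is in how $N_T$ is obtained: the paper counts directly the permutations $\pi$ with $T\in\fly_\pi$ (there are $\frac{n(n-k)!k!}{g}$ of them) and divides by the stabiliser size, whereas you extract $N_T$ from a second double count $\sum_T N_T = \frac{n}{g}|\mathcal{W}_{n,k}|$ and plug in the already-computed $|\mathcal{W}_{n,k}|$; these are equivalent uses of orbit--stabiliser and lead to the same numbers.
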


\begin{proof}
    Let $T\in\groundsubs{n}{k}$. There are $\frac{n (n-k)!k!}{g}$ permutations $\pi\in S_n$ such that $T\in \fly_{\pi}$. Now, assume $k \nmid n$. Using the above size of the stabiliser of $\fly_{\id}$, we conclude that there are $\frac{(n-k)!k!}{2(g!)^{n/g}}$ wreaths which contain $T$. The largest eigenvalue equals the sum of entries of any row of $M$. The sum of a row labelled by $\fly$ is
    \[
    \sum_{\fly'\in\mathcal{W}_{n,k}} |\fly\cap \fly'| = \sum_{T\in \fly}\sum_{\substack{\fly'\in \mathcal{W}_{n,k} \\ T\in \fly'}} 1 = \sum_{T\in \fly} \frac{(n-k)!k!}{2(g!)^{n/g}} = \frac{n}{g}\frac{(n-k)!k!}{2(g!)^{n/g}},
    \]
    as required.
    
    If $k\mid n$, there are $\frac{n(n-k)!}{k(k!)^{n/k-1}(\frac{n}{k})!}$ wreaths which contain $T$. The largest eigenvalue is therefore
    \[
    \sum_{\fly'\in\mathcal{W}_{n,k}} |\fly\cap \fly'| = \sum_{T\in \fly}\sum_{\substack{\fly'\in \mathcal{W}_{n,k} \\ T\in \fly'}} 1 = \sum_{T\in \fly} \frac{n(n-k)!}{k(k!)^{n/k-1}(\frac{n}{k})!} = \frac{n^2(n-k)!}{k^2(k!)^{n/k-1}(\frac{n}{k})!},
    \]
    as required.
\end{proof}

For any $T\in\groundsubs{n}{k}$ we write $w_T$ for the Boolean vector indexed by wreaths, which has ones at entries given by wreaths containing $T$. We also define a Boolean matrix $M_T=w_T w_T^T$. We directly see that $M_T$ are rank one symmetric positively semi-definite matrices. Moreover, $M$ equals the sum of matrices $M_T$ where $T$ runs over the subsets of $\groundset{n}$ of size $k$. We immediately deduce the following.

\begin{Prop}\label{pr:positive}
The matrix $M$ is positively semi-definite.
\end{Prop}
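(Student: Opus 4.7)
The plan is simply to make rigorous what the paragraph preceding the proposition already observes: the wreath matrix decomposes as a sum of rank-one positive semi-definite matrices, and positive semi-definiteness is closed under addition. No new ideas are needed beyond unpacking the definitions.

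First I would verify the identity $M = \sum_{T \in \groundsubs{n}{k}} M_T$ by comparing entries. Fix wreaths $\fly, \fly' \in \mathcal{W}_{n,k}$. By definition of $w_T$ we have $(w_T)_\fly = 1$ exactly when $T \in \fly$, so
\[
\bigl(M_T\bigr)_{\fly, \fly'} = (w_T)_\fly (w_T)_{\fly'} = \mathbbm{1}[T \in \fly]\,\mathbbm{1}[T \in \fly'].
\]
Summing over all $k$-subsets $T$ of $\groundset{n}$ therefore counts exactly those $T$ that lie in both $\fly$ and $\fly'$, giving $|\fly \cap \fly'| = M_{\fly, \fly'}$.

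Next I would check that each $M_T$ is positive semi-definite. For any real vector $x$ indexed by wreaths,
\[
x^T M_T x = x^T w_T w_T^T x = (w_T^T x)^2 \geq 0,
\]
so $M_T \succeq 0$. Finally, since the set of positive semi-definite matrices is closed under (finite) non-negative linear combinations, the identity $M = \sum_T M_T$ immediately yields $M \succeq 0$.

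There is no real obstacle here: the argument is essentially a bookkeeping check that the bilinear form $\langle \fly, \fly' \rangle = |\fly \cap \fly'|$ is a Gram matrix, with the vectors $w_T$ playing the role of the Gram factors. The only thing one could get wrong is the entry-wise verification of the decomposition, which is routine.
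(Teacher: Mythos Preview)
Your proof is correct and follows exactly the same approach as the paper: both arguments observe that $M = \sum_{T \in \groundsubs{n}{k}} M_T$ with each $M_T = w_T w_T^T$ a rank-one positive semi-definite matrix, and conclude by closure under addition. You have simply spelled out the entry-wise verification and the quadratic-form computation that the paper leaves implicit.
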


We can conclude a bit more from the decomposition $M = \sum_{T \in \groundsubs{n}{k}} M_T$. In particular, we see that the image of $M$ is spanned by vectors $w_T$ and therefore has dimension at most $\binom{n}{k}$. Moreover, a vector $v$ lies in the kernel of $M$ if and only if it is orthogonal to all vectors $w_T$. We end the section by proving \Cref{prop:tfae} which brings \Cref{conj:wreathconj} and the wreath matrix together.

\begin{proof}[Proof of \Cref{prop:tfae}]
    We prove the statement fully only for $k \nmid n$. The case $k \mid n$ is proved analogously; the difference in equivalent condition (c) comes from a different form of the size of the stabiliser.
    
    Let $c= \frac{g\binom{n}{k}}{n}$ be the desired number of wreaths in \Cref{conj:wreathconj}. Recall that $\widetilde{M} = M - \frac{n}{g} I$ is a pseudoadjacency matrix of $G_{\mathcal{W}_{n,k}}$. Suppose that \Cref{conj:wreathconj} holds for given $k\leq n$ and let $S$ be the desired set of wreaths. In the setting of \Cref{thm:DelsarteHoffman} applied with $G_{\mathcal{W}_{n,k}}$ and $\widetilde{M}$, we have $N=\frac{g(n-1)!}{2(g!)^{n/g}}$, $\lambda_{\min} = -\frac{n}{g}$ (as $M$ is positively semi-definite) and $\lambda_1=\frac{n (n-k)!k!}{2g (g!)^{n/g}} + \lambda_{\min}$ (by \Cref{le:lambda1}).
    
    One readily checks that $|S|=c=\frac{-\lambda_{\min}}{\lambda_{1}-\lambda_{\min}}N$. We thus obtain equality, which provides us with a vector in the kernel of $M$ which clearly satisfies the conditions in (b)\footnote{Alternatively, one can verify that the vector from \Cref{thm:DelsarteHoffman} satisfies the conditions in (b) directly without using the theorem itself.}. Thus (a) $\Rightarrow$ (b).
    
    Any vector in the kernel of $M$ is orthogonal to $\sum_{T\in\groundsubs{n}{k}} w_T$, which has all entries equal. Therefore the vector in (b) must have (up to multiplication by scalar $-1$) $c$ positive entries and the remaining entries are negative. Thus it satisfies the conditions in (c), and so (b) $\Rightarrow$ (c).

    To show (c) $\Rightarrow$ (d), pick vector $v$ as in (c) and $T\in\groundsubs{n}{k}$. Again, $v$ is orthogonal to $w_T$. Either there exists wreath $\fly$ which contains $T$ such that the entry of $v$ indexed by $\fly$ is negative, as desired, or the entries of $v$ indexed by all such wreaths are zero. But there are $\frac{(n-k)!k!}{2(g!)^{n/g}}$ such wreaths, which contradicts the final condition of (c).

    Finally, suppose that (d) holds. Let $P(v)$ be the set of wreaths which index positive entries of $v$. We claim that $P(v)$ is a suitable set of wreaths for \Cref{conj:wreathconj}. As $|P(v)|\leq c$, it is sufficient to show that each $T\in\groundsubs{n}{k}$ lies in at least one wreath in $P(v)$. Similarly to the last paragraph, as $v$ is orthogonal to $w_T$, this would only fail if all entries of $v$ indexed by wreaths which contain $T$ were all zero. But this is forbidden by (d).
\end{proof}

\section{Action of the symmetric group}\label{se:reptheory}
In the rest of the paper we assume $k<\frac{n}{2}$. As justified after \Cref{thm:matrixresults}, this assumption is not restrictive.

To analyse matrix $M$ we apply results from the representation theory of the symmetric groups. The necessary background is provided below; more details can be found, for example, in James \cite{JamesSymmetric78}.

Recall that we write $S_n$ for the symmetric group of permutations of $\groundset{n}$. The irreducible $\C S_n$-modules are given by \textit{Specht modules} $S^{\lambda}$ which are labeled by partitions $\lambda$ of $n$. For $l\leq k$ let $V_l$ be the permutation module of $\C S_n$ with basis $\{ v_A : A \in \groundsubs{n}{l}\}$ equipped with the natural action of $S_n$ on subsets of $\groundset{n}$ of size $l$. We remark that module $V_l$ is isomorphic to the permutation module commonly denoted as $M^{(n-l,l)}$.

If $l\leq l'$, there is a natural inclusion of $V_l$ in $V_{l'}$ given by $v_A \mapsto \frac{1}{\binom{n-l}{l'-l}}\sum_{A \subseteq B} v_B$ (where $|A|=l$ and $|B|=l'$); see \cite[Exercise~7.75b]{StanleyEnumerativeII99}. From now on we will abuse the notation and for $0\leq l\leq k$ identify module $V_l$ with its image in $V_k$.

By \cite[Example~17.17]{JamesSymmetric78} it follows that for any $l\leq k$ there is a decomposition
\[
V_l = \bigoplus_{i=0}^l U_i,
\]
where $U_i\cong S^{(n-i,i)}$. Consequently, for $l \geq 1$, we have decomposition $V_l= V_{l-1} \oplus U_l$, from which it follows that $U_l$ has dimension $\binom{n}{l} - \binom{n}{l-1}$.

Using the construction of Specht modules \cite[Section~4]{JamesSymmetric78}, if $[l]_n = \{ 1,\dots , l\}\subseteq \groundset{n}$, the submodule $U_l$ is generated by
\begin{equation}\label{eq:generator}
    t_l:=\prod_{i=1}^l (\id - \tau_{i, l+i}) v_{[l]_n} = \sum_{\pi \in C_2^l} \sgn(\pi) v_{\pi [l]_n},
\end{equation}
where $\tau_{i, l+i}$ is the transposition $(i\hquad l+i)$ and $C_2^l$ is a subgroup of $S_n$ generated by such transpositions with $1\leq i\leq l$.

We can now return to wreaths. Let $V$ be a complex vector space with a basis $\{e_\fly : \fly \in\mathcal{W}_{n,k} \}$. The action of $S_n$ on the set of wreaths makes $V$ into a permutation module of $\C S_n$. We define a $\C S_n$-module homomorphism $\Phi: V_k\to V$ given by $\Phi(v_T) = w_T$ for any $T\in \groundsubs{n}{k}$, where $w_T = \sum_{T\in \fly} e_\fly$. We use the decomposition $V_k = \bigoplus_{l=0}^k U_l$, to understand $\Phi$. The following is a key computational lemma.

\begin{lemma}\label{le:nonzero}
    Let $0\leq l \leq k$ be a positive integer with $l\neq 1$. Then $\Phi(t_l)\neq 0$.
\end{lemma}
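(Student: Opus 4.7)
The plan is to make the formula for $\Phi(t_l)$ explicit as a vector in $V$ and then to exhibit a single wreath whose coefficient there is nonzero.

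First, I would use the inclusion $V_l \hookrightarrow V_k$, under which $v_A \in V_l$ becomes $\binom{n-l}{k-l}^{-1}\sum_{B \supseteq A} v_B$. Combined with $\Phi(v_B) = w_B = \sum_{\fly \ni B} e_\fly$, this gives
\[
\Phi(t_l) \;=\; \binom{n-l}{k-l}^{-1}\!\! \sum_{\fly \in \mathcal{W}_{n,k}} A_\fly\, e_\fly,
\quad\text{with}\quad A_\fly = \sum_{B\in\fly} f(B), \quad f(B):=\prod_{i=1}^{l}\bigl(\mathbf{1}[i\in B]-\mathbf{1}[l+i\in B]\bigr).
\]
Each factor of $f(B)$ lies in $\{-1,0,1\}$, so $f(B)\neq 0$ precisely when $B$ contains exactly one element from each pair $\{i,l+i\}$ for $i=1,\dots,l$; call such a $B$ a \emph{transversal}. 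It therefore suffices to produce a single $\fly$ with $A_\fly\neq 0$.

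My plan is to design a permutation $\pi$ so that in $\fly=\fly_\pi$ the first block $B_1=\{\pi(1),\dots,\pi(k)\}$ is the unique transversal. Setting $\pi(j)=j$ for $j=1,\dots,l$ and choosing $\pi(l+1),\dots,\pi(k)$ from $\groundset{n}\setminus\{1,\dots,2l\}$ (possible since $n>2k\geq k+l$) ensures $B_1\supseteq\{1,\dots,l\}$ and $B_1\cap\{l+1,\dots,2l\}=\emptyset$, so $f(B_1)=1$. The remaining positions of $\pi$ will be filled with the other elements---in particular $\{l+1,\dots,2l\}$---and the challenge is to do so without creating any further transversal block.

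When $k\mid n$ the wreath has pairwise disjoint blocks, so for $j\neq 1$ one has $B_j\cap\{1,\dots,l\}=\emptyset$; such a $B_j$ can be a transversal only by containing all of $\{l+1,\dots,2l\}$. Using $l\geq 2$, I would scatter these upper elements across two distinct remaining blocks (say, $l+1$ into $B_2$ and $l+2,\dots,2l$ into $B_3$), eliminating every other potential transversal and yielding $A_\fly=1$. The main obstacle is the case $k\nmid n$, where blocks overlap cyclically and some $B_j$ with $j\neq 1$ may inherit elements of $\{1,\dots,l\}$ via wrap-around. To address this, I would additionally place $l+1$ cyclically adjacent to element~$1$ (e.g.\ $\pi(n)=l+1$); every wrap-around block spanning positions $n$ and $1$ then contains both members of the pair $\{1,l+1\}$ and satisfies $f_1=0$. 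The remaining upper elements $l+2,\dots,2l$ are then placed sufficiently spread out in $\pi(k+1),\dots,\pi(n-1)$ so that no non-wrapping block accidentally collects a full transversal. A short case analysis on the overlap structure of the wreath then confirms $B_1$ remains the sole transversal and $A_\fly=1\neq 0$.
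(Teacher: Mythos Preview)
Your approach is essentially the paper's: both expand $\Phi(t_l)$ in the basis $\{e_\fly\}$ and exhibit one wreath with nonzero coefficient, and both rely on the same key device of placing $1$ and $l+1$ at cyclically adjacent positions of the wreath (the paper's wreath $\fly'$ in Figure~2 has $\pi'(1)=l+1$ and $\pi'(n)=1$). Your transversal function $f(B)=\prod_i(\mathbf{1}[i\in B]-\mathbf{1}[l+i\in B])$ is a clean repackaging of the paper's sign sum over $C_2^l$.

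Two points need tightening. First, you omit $l=0$; this is harmless since then $f(B)=1$ for all $B$ and $A_\fly=|\fly|=n/g\neq 0$. Second, your ``short case analysis'' for $k\nmid n$ is underspecified and slightly misdirected. With $\pi(1)=1$ and $\pi(n)=l+1$, any block containing \emph{neither} position $1$ nor position $n$ already has $f_1=0$ (it misses both $1$ and $l+1$), so there is nothing to ``spread out'' for those. The only block besides $B_1$ that needs attention is the unique block $[n-k+1,n]$ (it contains position $n$ but not position $1$, hence $l+1$ but not $1$); it becomes a transversal exactly when it contains all of $l+2,\dots,2l$. So it suffices to place a single element $l+i$ with $2\le i\le l$ at some position in $[k+1,n-k]$; this is possible because $n>2k$ gives at least one such position. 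With that one explicit placement your argument is complete.
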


\begin{proof}
    If $l=0$, then $\Phi(t_0)=\frac{1}{\binom{n}{k}}\sum_{|B|=k} w_B\neq 0$. Suppose now that $l\geq 2$.
    
    We expand $\binom{n-l}{k-l}\Phi(t_l)$ as $\sum_\fly \alpha_\fly e_\fly$ and let $\fly'$ be the wreath in \Cref{fig:contribution}. We claim that $|\alpha_{\fly'}| =1$, which implies the result. Using the embedding of $V_l$ into $V_k$, we compute
    
    \begin{align*}
        \binom{n-l}{k-l}\Phi(t_l) &= \binom{n-l}{k-l} \sum_{\pi \in C_2^l} \sgn(\pi) \Phi(v_{\pi [l]_n})\\
        &= \sum_{\pi \in C_2^l} \sgn(\pi) \sum_{\substack{[l]_n\subseteq B \\ |B|=k }} \Phi(v_{\pi B}) \\
        &= \sum_{\pi \in C_2^l} \sum_{\substack{[l]_n\subseteq B \\ |B|=k }} \sgn(\pi) w_{\pi B}\\
        &= \frac{1}{2}\sum_{\pi \in C_2^l} \sum_{\substack{[l]_n\subseteq B \\ |B|=k }} \sgn(\pi) (w_{\pi B} - w_{\pi\tau_{1, l+1} B})\\
        &= \frac{1}{2}\sum_{\pi \in C_2^l} \sum_{\substack{[l]_n\subseteq B \\ |B|=k \\ l+1\notin B }} \sgn(\pi) (w_{\pi B} - w_{\pi\tau_{1, l+1} B})\\
        &= \sum_{\pi \in C_2^l} \sum_{\substack{[l]_n\subseteq B \\ |B|=k \\ l+1\notin B}} \sgn(\pi) w_{\pi B}.
    \end{align*}
    For any $\pi \in C_2^l$ and any set $B$ of size $k$ which contains $[l]_n$ as a subset but does not contain $l+1$, exactly one of $1$ and $l+1$ lies in $\pi B$. Focusing on $\fly'$, and in particular, the fact that $1$ and $l+1$ lie next to each other, we get contribution to $\alpha_{\fly'}$ from $w_{\pi B}$ only if $\pi B$ is either first $k$ clockwise elements of $\fly'$ starting with $l+1$ or last $k$ clockwise elements of $\fly'$ ending with $1$. However, the latter cannot happen as in such case $\pi B$ does not contain $2$ or $l+2$ (otherwise, its size would be at least $n-k>k$), which cannot happen for $l\geq 2$. Thus we get contribution only for $\pi = \prod_{i=1}^l \tau_{i, l+i}$ and $B=[l]_n\cup \{2l+1,2l+2,\dots, l+k\}$. Thus $|\alpha_{\fly'}|=1$ as claimed.
\end{proof}

\begin{figure}[h]
    \centering
    \begin{tikzpicture}
    \draw[rotate=0*360/22](0,4) node {$l+1$};
    \draw[rotate=21*360/22](0,4) node {$l+2$};
    \draw[rotate=20*360/22](0,4) node {$l+3$};
    \draw[rotate=19*360/22](0,4) node {$.$};
    \draw[rotate=18*360/22](0,4) node {$.$};
    \draw[rotate=17*360/22](0,4) node {$.$};
    \draw[rotate=16*360/22](0,4) node {$l+k$};
    \draw[rotate=15*360/22](0,4) node {$2$};
    \draw[rotate=14*360/22](0,4) node {$3$};
    \draw[rotate=13*360/22](0,4) node {$4$};
    \draw[rotate=12*360/22](0,4) node {$.$};
    \draw[rotate=11*360/22](0,4) node {$.$};
    \draw[rotate=10*360/22](0,4) node {$.$};
    \draw[rotate=9*360/22](0,4) node {$l$};
    \draw[rotate=8*360/22](0,4) node {$l+k+1$};
    \draw[rotate=7*360/22](0,4) node {$l+k+2$};
    \draw[rotate=6*360/22](0,4) node {$l+k+3$};
    \draw[rotate=5*360/22](0,4) node {$.$};
    \draw[rotate=4*360/22](0,4) node {$.$};
    \draw[rotate=3*360/22](0,4) node {$.$};
    \draw[rotate=2*360/22](0,4) node {$n$};
    \draw[rotate=1*360/22](0,4) node {$1$};
    \end{tikzpicture}
    \caption{Wreath $\fly' = \fly_{\pi'}$ used in the proof of \Cref{le:nonzero}. Here we take $\pi'(1) = l +1 $.}
    \label{fig:contribution}
\end{figure}

We are now ready to describe the image and kernel of $\Phi$.

\begin{lemma}\label{le:modhom}
    The image of $\Phi$ is the vector space $W\leq V$ spanned by vectors $w_T$ (with $|T|=k$). Moreover, the kernel of $\Phi$ is isomorphic to $U_1$.
\end{lemma}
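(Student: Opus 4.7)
The image statement is immediate: because $\{v_T : T \in \groundsubs{n}{k}\}$ is a basis of $V_k$ and $\Phi(v_T) = w_T$ by definition, we have $\operatorname{im}(\Phi) = \operatorname{span}\{w_T\} = W$.

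For the kernel, the plan is to exploit the irreducible decomposition $V_k = \bigoplus_{l=0}^k U_l$ with $U_l \cong S^{(n-l,l)}$. Since each $U_l$ is irreducible and $\Phi$ is a $\C S_n$-module map, Schur's lemma forces $\Phi|_{U_l}$ to be either zero or injective. By \Cref{le:nonzero}, $\Phi(t_l)\neq 0$ for $l\in\{0,2,3,\ldots,k\}$, and since $t_l$ generates $U_l$, the restriction $\Phi|_{U_l}$ is injective for these values of $l$. Hence $\ker\Phi \subseteq U_1$, and equality will follow once I verify that $\Phi(t_1) = 0$.

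To establish $\Phi(t_1) = 0$, I would use the embedding $V_1\hookrightarrow V_k$ to rewrite $t_1 = v_{\{1\}}-v_{\{2\}}$ as a scalar multiple of $\sum_{1\in B} v_B - \sum_{2\in B} v_B$ (the sums running over $B\in\groundsubs{n}{k}$), and then observe that the coefficient of $e_\fly$ in $\Phi(t_1)$ is exactly
\[
\#\{B \in \fly : 1 \in B\} - \#\{B \in \fly : 2 \in B\}.
\]
By the cyclic construction of a wreath, every element of $\groundset{n}$ lies in exactly $k/g$ of the $n/g$ sets of any given wreath (the starting positions of the length-$k$ blocks form an evenly spaced subset of $\groundset{n}$), so both counts equal $k/g$ and the coefficient vanishes for every $\fly$.

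The argument is almost entirely structural; there is no substantial obstacle, since \Cref{le:nonzero} has already done the hard work for $l\neq 1$ and the only nonroutine ingredient left is the small symmetry observation about wreaths, which reduces to a multiplicity count.
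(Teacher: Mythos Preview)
Your proof is correct and follows essentially the same route as the paper: use \Cref{le:nonzero} to rule out $U_l$ for $l\neq 1$ from the kernel, and check directly that $U_1$ maps to zero. The only cosmetic difference is in that last step---the paper shows $\Phi(v_{\{1\}})=\Phi(v_\emptyset)$ by averaging $\sum_{i\in T} w_T$ over all $i$, whereas you show $\Phi(v_{\{1\}})=\Phi(v_{\{2\}})$ via the $k/g$-multiplicity count in a wreath; both amount to the same symmetry observation.
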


\begin{proof}
    The first statement is trivial from the definition of $\Phi$. To prove the `moreover' part we firstly need to show that $U_1$ lies in the kernel of $\Phi$, that is $\Phi(V_1) = \Phi(V_0)$. In particular, we can show that $\Phi(v_{\{1\}})$ equals $\Phi(v_{\emptyset})$. But this is clear as
    \[
    \Phi(v_{\{1\}}) = \frac{1}{\binom{n-1}{k-1}}\sum_{\substack{1\in T \\ |T|=k}} w_T =\frac{1}{n\binom{n-1}{k-1}}\sum_{i=1}^n\sum_{\substack{i\in T \\ |T|=k}} w_T =
    \frac{k}{n\binom{n-1}{k-1}} \sum_{|T|=k} w_T = \Phi(v_{\emptyset}).
    \]
Finally, the kernel cannot be any larger as it is a submodule of $V_k$ and does not contain $U_l$ for $l\neq 1$ by \Cref{le:nonzero}.
\end{proof}

For readers with further algebraic interests, we include the algebraic interpretation of the map $\Phi$.

\begin{remark}
    Let $G=S_n$, $H=S_k\times S_{n-k}$ and $K$ be the wreath product $S_g\wr D_{\frac{2n}{g}}$ if $k\nmid n$ or $S_k\wr S_{\frac{n}{k}}$ if $k\mid n$. If we write $\C$ for the trivial module, we obtain isomorphism $V_k\cong \C\big\uparrow _{H}^{G}$ and $V\cong \C\big\uparrow _{K}^{G}$. In turn $\Phi$ lies in $\Hom_{G}(\C\big\uparrow _{H}^{G},\C\big\uparrow _{K}^{G})$. Frobenius reciprocity, Mackey theorem and Schur's lemma give rise to isomorphisms $\Hom_{G}(\C\big\uparrow _{H}^{G},\C\big\uparrow _{K}^{G})\cong \Hom_{H}(\C,\C\big\uparrow _{K}^{G}\big\downarrow _{H}^{G}) \cong \bigoplus_{t\in H\backslash G / K} \Hom_{H}(\C,\C\big\uparrow_{H\cap tKt^{-1}}^{H}) \cong \bigoplus_{t\in H\backslash G / K} \Hom_{H\cap tKt^{-1}}(\C,\C)$. This is just a vector space with a basis indexed by double cosets $t\in H\backslash G / K$ and one can verify that under the above isomorphism, the map $\Phi$ corresponds to the basis element labelled by $t=1$. 
\end{remark}

We define $E_i=\Phi(U_i)\cong U_i$ for $i=0$ and $2\leq i\leq k$, $W_l=\Phi(V_l)$ for $0\leq l\leq k$ and $w_A = \binom{n-l}{k-l}\Phi(v_A)$ for any set $A$ of size $l\leq k$, which generates the module $W_l$. The notation $w_A$ is consistent with the early notation for sets $A$ of size $k$. Thus $W=W_k$, $W_l= E_0 \oplus E_2 \oplus E_3 \oplus \ldots \oplus E_l$ and $w_A = \sum_{A\subseteq B} w_B$, where the sum runs over sets $B$ of size $k$. For $i=0$ we can see that $E_0$ is in fact the one-dimensional module spanned by vector $\sum_{\fly\in \mathcal{W}_{n,k}} e_{\fly}$. A diagram which shows the relations between the various $\C S_n$-modules is in \Cref{fig:modules}.

\begin{figure}[h]
    \centering
    \begin{tikzpicture}
	\node (1) at (-14.35, 7) {};
	\node (2) at (-7.75, 7) {};
	\node (3) at (-11, 7.5) {$V_i$};
	\node (8) at (-9.5, 5.25) {};
	\node (9) at (-9.5, 3.75) {};
	\node (10) at (-10, 4.5) {$\Phi$};
	\node (11) at (-14, 6.25) {$U_0$};
	\node (12) at (-13.25, 6.25) {$\oplus$};
	\node (13) at (-12.5, 6.25) {$U_1$};
	\node (14) at (-11.75, 6.25) {$\oplus$};
	\node (15) at (-11, 6.25) {$U_2$};
	\node (16) at (-10.25, 6.25) {$\oplus$};
	\node (17) at (-9.5, 6.25) {$\cdots$};
	\node (18) at (-8.75, 6.25) {$\oplus$};
	\node (19) at (-8, 6.25) {$U_i$};
	\node (20) at (-7.25, 6.25) {$\oplus$};
	\node (21) at (-6.5, 6.25) {$\cdots$};
	\node (22) at (-5.75, 6.25) {$\oplus$};
	\node (23) at (-5, 6.25) {$U_k$};
	\node (24) at (-14.35, 2) {};
	\node (25) at (-7.75, 2) {};
	\node (26) at (-11, 1.5) {$W_i$};
	\node (27) at (-14, 2.75) {$E_0$};
	\node (28) at (-13.25, 2.75) {$\oplus$};
	\node (29) at (-12.5, 2.75) {$0$};
	\node (30) at (-11.75, 2.75) {$\oplus$};
	\node (31) at (-11, 2.75) {$E_2$};
	\node (32) at (-10.25, 2.75) {$\oplus$};
	\node (33) at (-9.5, 2.75) {$\cdots$};
	\node (34) at (-8.75, 2.75) {$\oplus$};
	\node (35) at (-8, 2.75) {$E_i$};
	\node (36) at (-7.25, 2.75) {$\oplus$};
	\node (37) at (-6.5, 2.75) {$\cdots$};
	\node (38) at (-5.75, 2.75) {$\oplus$};
	\node (39) at (-5, 2.75) {$E_k$};
	\draw (1.center) to (2.center);
	\draw [->] (8.center) to (9.center);
	\draw (24.center) to (25.center);
    \end{tikzpicture}
    \caption{The relations between the introduced $\C S_n$-modules.}
    \label{fig:modules}
\end{figure}

We end the section with the first result about the eigenvalues of $M$. The key ingredient is Schur's lemma \cite[Proposition~4]{SerreRepresentations77}, which implies that any endomorphism of $W= E_0 \oplus E_2 \oplus E_3 \oplus \ldots \oplus E_k$ acts on each $E_i$ by scalar multiplication. As in \Cref{thm:matrixresults}, the eigenvalues may not be pairwise distinct.

    \begin{Prop}\label{pr:multiplicities}
The matrix $M$ has $k+1$ eigenvalues $\lambda_1 \geq \lambda_2,\lambda_3, \ldots, \lambda_k >\lambda_0=0$. The multiplicity of $\lambda_1$ is $1$. The multiplicity of $\lambda_l$ is $\binom{n}{l}-\binom{n}{l-1}$ for $2 \leq l \leq k$.
\end{Prop}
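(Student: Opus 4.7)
My plan is to apply Schur's lemma to the action of $M$ on the $\mathbb{C}S_n$-isotypic decomposition of $V$ already set up in the excerpt. First I would verify that $M$, viewed as an endomorphism of $V$, commutes with the $S_n$-action: this is immediate since $M_{\tau \fly,\tau \fly'} = |\tau \fly \cap \tau \fly'| = |\fly \cap \fly'| = M_{\fly,\fly'}$ for every $\tau \in S_n$, so $M$ is a $\mathbb{C}S_n$-module homomorphism.

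Next I would use the orthogonal decomposition $V = W \oplus W^\perp$ with respect to the standard inner product making $\{e_\fly\}$ orthonormal. The excerpt records that $\mathrm{Im}(M) = W$, and since $M$ is symmetric this forces $\ker M = W^\perp$. Both summands are $\mathbb{C}S_n$-submodules, so the problem reduces to diagonalising $M|_W$ on $W = E_0 \oplus E_2 \oplus \cdots \oplus E_k$. The summands are pairwise non-isomorphic irreducibles, being isomorphic to the Specht modules $S^{(n-l,l)}$ for distinct values of $l$ in the range $0 \leq l < n/2$. By Schur's lemma, $M|_W$ acts on each $E_l$ by a scalar; I would label the scalar on $E_0$ as $\lambda_1$, and the scalar on $E_l$ as $\lambda_l$ for $2 \leq l \leq k$, with $\lambda_0 := 0$ being the eigenvalue on the kernel. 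The claimed multiplicities are then immediate from $\dim E_0 = 1$ and $\dim E_l = \binom{n}{l} - \binom{n}{l-1}$.

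It remains to establish strict positivity of $\lambda_1, \dots, \lambda_k$ and the maximality of $\lambda_1$. Positive semi-definiteness (\Cref{pr:positive}) yields $\lambda_l \geq 0$ for all $l$; strict positivity for $l \in \{1,2,\ldots,k\}$ then follows because the nonzero subspace $E_l$ is contained in $W$, so $\lambda_l = 0$ would force $E_l \subseteq \ker M = W^\perp$, contradicting $W \cap W^\perp = 0$. For the maximality, I would observe that $E_0$ is spanned by the all-ones vector $\sum_\fly e_\fly$, hence $\lambda_1$ equals the common row sum of $M$ computed in \Cref{le:lambda1}; \Cref{re:max evalue} identifies this with the largest eigenvalue of $M$.

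I do not anticipate any serious obstacle: once $\mathrm{Im}(M) = W$ and $\ker M = W^\perp$ are combined with the isotypic decomposition of $W$ built up through \Cref{le:modhom}, the argument reduces to Schur's lemma plus positive semi-definiteness. The only point requiring care is the mildly off-by-one indexing convention, under which the scalar acting on the one-dimensional trivial piece $E_0$ is labelled $\lambda_1$, while $\lambda_0 = 0$ is reserved for the eigenvalue on the kernel.
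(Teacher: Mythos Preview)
Your proposal is correct and follows essentially the same approach as the paper: identify $M$ as a $\mathbb{C}S_n$-endomorphism, use $\mathrm{Im}(M)=W$ (equivalently $\ker M=W^{\perp}$) together with the decomposition $W=E_0\oplus E_2\oplus\cdots\oplus E_k$ into pairwise non-isomorphic irreducibles, and invoke Schur's lemma plus positive semi-definiteness. Your write-up is in fact slightly more explicit than the paper's about why the $E_l$ are non-isomorphic and why $\lambda_1$ is maximal, but the argument is the same.
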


\begin{proof}
Let $\alpha$ be the linear endomorphism of $V$ given by $M$. Clearly, $\alpha$ commutes with the action of the symmetric group, in other words, $\alpha$ is an endomorphism of the $\C S_n$-module $V$. Moreover, from the decomposition $M=\sum_{T\in\groundsubs{n}{k}} M_T$, we know that $\alpha$ has image $W=\Phi(V_k)$ and we can thus restrict $\alpha$ to $W$, which yields a positive definite map. By Schur's lemma, $\alpha$ acts on each submodule $E_l$ by scalar multiplication (where $l=0$ or $2\leq l\leq k$). We know that the one-dimensional space $E_0$ is the eigenspace of the largest eigenvalue $\lambda_1$. And for $2\leq l\leq k$ if $\lambda_l$ is the scalar by which $\alpha$ acts on $E_l$, then $\lambda_l>0$ and its multiplicity is the dimension of $E_l$ which is $\binom{n}{l}-\binom{n}{l-1}$, as required.
\end{proof}

\section{Computing eigenvalues}\label{se:evalues}

We now compute the eigenvalues $\lambda_l$ of $M$ from \Cref{pr:multiplicities} for $2\leq l\leq k$. As noted at the end of the previous section, the eigenspace corresponding to $\lambda_l$ is $E_l$.

\begin{lemma}\label{le:inclusionexclusion}
    Let $A$ be a subset of $\groundset{n}$ of size $l$ such that $2\leq l\leq k$.
    \begin{enumerate}
        \item For a set $J\subseteq A$ we have
        \[
        \sum_{\substack{T\cap A = J \\ |T| = k}} w_T \equiv (-1)^{|A\setminus J|} w_A \mod{W_{l-1}}.
        \]
        \item For a non-negative integer $j\leq l$ we have
        \[
        \sum_{\substack{|T\cap A| = j \\ |T| = k}} w_T \equiv (-1)^{l-j}\binom{l}{j} w_A \mod{W_{l-1}}.
        \]
    \end{enumerate}
\end{lemma}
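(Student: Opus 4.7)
The plan is to obtain part 1 via Möbius inversion on the Boolean lattice of subsets of $A$, and then deduce part 2 by summing part 1 over all $J\subseteq A$ of size $j$.

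For part 1, for each $J\subseteq A$ I would set
\[
X_J \;=\; \sum_{\substack{T\cap A=J\\|T|=k}} w_T.
\]
The key observation is that for any $J_0\subseteq A$, summing $X_J$ over $J\supseteq J_0$ gives
\[
\sum_{J_0\subseteq J\subseteq A} X_J \;=\; \sum_{\substack{T\cap A\supseteq J_0\\|T|=k}} w_T \;=\; \sum_{\substack{J_0\subseteq T\\|T|=k}} w_T \;=\; w_{J_0},
\]
where the second equality uses $J_0\subseteq A$, so that $T\cap A\supseteq J_0$ is the same as $T\supseteq J_0$, and the third uses the definition $w_{J_0}=\sum_{J_0\subseteq B,\,|B|=k} w_B$ recalled just before the statement. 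Möbius inversion on the Boolean lattice of subsets of $A$ then yields
\[
X_J \;=\; \sum_{J\subseteq J'\subseteq A} (-1)^{|J'\setminus J|}\, w_{J'}.
\]

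To finish part 1, I would invoke the nested structure $W_0\subseteq W_1\subseteq\cdots\subseteq W_k$ (which follows from the identification $V_0\subseteq V_1\subseteq\cdots\subseteq V_k$ and $W_l=\Phi(V_l)$). For any proper subset $J'\subsetneq A$ we have $|J'|\leq l-1$, hence $w_{J'}\in W_{|J'|}\subseteq W_{l-1}$. Therefore every term with $J'\neq A$ in the Möbius sum vanishes modulo $W_{l-1}$, leaving only the $J'=A$ term, which gives $(-1)^{|A\setminus J|}w_A$, as desired.

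For part 2, I would partition the index set according to the intersection with $A$: a set $T$ of size $k$ with $|T\cap A|=j$ is counted precisely once when we sum $X_J$ over all $J\subseteq A$ with $|J|=j$. Using part 1, modulo $W_{l-1}$ we obtain
\[
\sum_{\substack{|T\cap A|=j\\|T|=k}} w_T \;=\; \sum_{\substack{J\subseteq A\\|J|=j}} X_J \;\equiv\; \sum_{\substack{J\subseteq A\\|J|=j}} (-1)^{l-j} w_A \;=\; (-1)^{l-j}\binom{l}{j} w_A \pmod{W_{l-1}}.
\]
There is no real obstacle here: the argument is entirely formal once one recognises Möbius inversion as the correct tool and remembers that $w_{J'}$ for $|J'|<l$ already lives in $W_{l-1}$. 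The only thing to be careful about is the direction of the inversion and the need that $J_0\subseteq A$ so that ``$T\cap A\supseteq J_0$'' reduces to ``$T\supseteq J_0$''.
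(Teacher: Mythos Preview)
Your proof is correct and is essentially the same inclusion--exclusion/M\"obius argument the paper gives: the paper performs the identical computation in $V_k$ (obtaining $\sum_{T\cap A=J} v_T=\sum_{J\subseteq C\subseteq A}(-1)^{|C\setminus J|}\binom{n-|C|}{k-|C|}v_C$) and then applies $\Phi$, whereas you work directly with the $w_{J'}$ in $W$, which is a cosmetic rather than substantive difference. Part~2 is obtained identically in both by summing over $J\subseteq A$ of fixed size $j$.
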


\begin{proof}
    Using the definition of $v_C\in V_k$ for any set $C$ of size at most $k$ and the principle of inclusion and exclusion, we compute
    \[
    \sum_{\substack{T\cap A = J \\ |T| = k}} v_T = \sum_{J\subseteq C\subseteq A}(-1)^{|C\setminus J|}\sum_{\substack{T\supseteq C \\ |T| = k}} v_T = \sum_{J\subseteq C\subseteq A}(-1)^{|C\setminus J|}\binom{n-|C|}{k-|C|}v_C.
    \]
    The right hand side becomes $(-1)^{|A\setminus J|} \binom{n-l}{k-l} v_A$ modulo $V_{l-1}$ and we obtain the first part after applying $\Phi$. Summing the congruence over all sets $J\subseteq A$ of a fixed size $j$ yields the second part. 
\end{proof}

For $2\leq l\leq k$ and $0\leq j\leq l$ we define the constant $b_{l,j}$ as the inner product of vectors $w_{[l]_n}$ and $w_{\{ l-j+1,l-j+2,\ldots, l-j+k\}}$. By symmetry, we have $\langle w_T,w_A \rangle=b_{l,j}$ whenever $A$ and $T$ are sets of sizes $l$ and $k$, respectively, with an intersection of size $j$. Hence, we compute the following.

\begin{lemma}\label{le:lambdaformula}
    Let $2\leq l\leq k$. Then 
    \[
    \lambda_l = \sum_{j=0}^l (-1)^{l-j} \binom{l}{j} b_{l,j}.
    \]
\end{lemma}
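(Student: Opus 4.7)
The approach is to compute $Mw_A$ in two different ways for a subset $A \subseteq \groundset{n}$ of size $l$, and then match the two expressions modulo $W_{l-1}$. On one hand we exploit the rank-one decomposition $M = \sum_{|T|=k} w_T w_T^T$ established in \Cref{se:matrix}; on the other hand we use that $M$ acts on the $E_l$-summand of $W$ by the scalar $\lambda_l$ (by \Cref{pr:multiplicities}).

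First I would write $Mw_A = \sum_{|T|=k} \langle w_T, w_A\rangle\, w_T$. By the $S_n$-invariance of the standard inner product on $V$ (under which the basis $\{e_{\fly}\}$ is orthonormal), the scalar $\langle w_T, w_A\rangle$ depends only on $|T \cap A|$ and thus equals $b_{l,\,|T\cap A|}$ for every $k$-set $T$. Grouping the sum by the intersection size $j = |T\cap A| \in \{0, 1, \ldots, l\}$ and then applying part~2 of \Cref{le:inclusionexclusion} yields
\[
Mw_A \;=\; \sum_{j=0}^{l} b_{l,j}\!\!\sum_{\substack{|T\cap A|=j\\ |T|=k}}\!\! w_T \;\equiv\; c\, w_A \pmod{W_{l-1}}, \qquad c := \sum_{j=0}^{l}(-1)^{l-j}\binom{l}{j}\,b_{l,j}.
\]

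Second, using the $S_n$-equivariant direct-sum decomposition $W_l = W_{l-1} \oplus E_l$, write $w_A = u + v$ with $u \in W_{l-1}$ and $v \in E_l$. Since $M$ preserves the decomposition $W = E_0 \oplus E_2 \oplus \cdots \oplus E_k$, we have $Mu \in W_{l-1}$ and $Mv = \lambda_l v$. Comparing with the congruence of the previous paragraph gives $\lambda_l v \equiv c v \pmod{W_{l-1}}$, and since $E_l \cap W_{l-1} = 0$ we deduce $(\lambda_l - c)v = 0$.

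The lemma will follow once we check $v \neq 0$ for some choice of $A$. If $v = 0$ for one $A$ of size $l$, then by the transitivity of the $S_n$-action on $l$-subsets it would vanish for every such $A$; but $\{w_A : |A|=l\}$ spans $W_l$ (since $w_A = \binom{n-l}{k-l}\Phi(v_A)$ and the $v_A$ span $V_l$), so this would force $W_l \subseteq W_{l-1}$, contradicting $E_l = \Phi(U_l) \neq 0$ (which is \Cref{le:nonzero}, valid since $l \geq 2$). The computations themselves are routine bookkeeping handed to us by \Cref{le:inclusionexclusion}; the only conceptually delicate step is this final non-vanishing claim, which is nonetheless handled cleanly by the already-established \Cref{le:nonzero}.
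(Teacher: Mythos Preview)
Your proof is correct and follows essentially the same route as the paper's: expand $Mw_A$ via the rank-one decomposition $M=\sum_T w_Tw_T^T$, use symmetry to replace $\langle w_T,w_A\rangle$ by $b_{l,|T\cap A|}$, apply \Cref{le:inclusionexclusion}(2), and then read off the eigenvalue from the $E_l$-component of $w_A$. The only cosmetic difference is that the paper disposes of the non-vanishing step in one line by noting that $w_A$ generates the module $W_l$ (hence $w_A\notin W_{l-1}$), whereas you argue it via transitivity and \Cref{le:nonzero}; both are fine.
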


\begin{proof}
    Recall that $M$ acts by multiplication by $\lambda_l$ on $E_l$ and $W_l = W_{l-1}\oplus E_l$. For any set $A$ of size $l$ we have that $w_A$ generates the module $W_l$, and thus $w_A\in W_l\setminus W_{l-1}$. Therefore $\lambda_l$ is the eigenvalue of $M$ acting on $w_A$ modulo $W_{l-1}$. Using \Cref{le:inclusionexclusion} we compute
    \begin{align*}
        M w_A &\equiv \sum_{T\in \groundsubs{n}{k}} M_T w_A \equiv \sum_{T\in \groundsubs{n}{k}} \langle w_T, w_A \rangle w_T \equiv \sum_{j=0}^l b_{l,j} \sum_{\substack{|T\cap A| = j \\ |T| = k}} w_T \\&\equiv \sum_{j=0}^l (-1)^{l-j} \binom{l}{j} b_{l,j} w_A \mod{W_{l-1}},
    \end{align*}
    as desired.
\end{proof}

We can now compute the coefficients $b_{l,j}$ and eigenvalues of $M(n,k)$ when $k \mid n$.

\begin{Prop}\label{pr:eig_k_div_n}
    Let $2\leq l\leq k$ and $0\leq j\leq l$ be integers. Provided that $k$ divides $n$, we have
    \begin{align*}
        b_{l,j} =
        \begin{cases}
        \frac{(n-2k)!}{(k!)^{n/k-2}(\frac{n}{k}-2)!}\binom{n-k-l}{k-l}& \textnormal{if } j=0,\\
        0 & \textnormal{if } 0<j<l,\\
        \frac{(n-k)!}{(k!)^{n/k-1}(\frac{n}{k}-1)!}&\textnormal{if } j=l.\\
        \end{cases}
    \end{align*}
    In particular, we have
    \begin{align*}
        \lambda_l =  \frac{(n-k)!}{(k!)^{n/k-1}(\frac{n}{k}-1)!} + (-1)^l \frac{(n-2k)!}{(k!)^{n/k-2}(\frac{n}{k}-2)!}\binom{n-k-l}{k-l}.
    \end{align*}
\end{Prop}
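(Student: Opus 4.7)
The plan is to exploit the rigid structure of wreaths when $k\mid n$: each $(n,k)$-wreath is an unordered partition of $\groundset{n}$ into $n/k$ disjoint blocks of size $k$. The critical consequence, which drives the whole computation, is that two distinct $k$-subsets of $\groundset{n}$ lie in a common wreath only when they are disjoint.

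First I will compute the building blocks $\langle w_B, w_{B'}\rangle$ for $k$-subsets $B,B'$; this inner product simply counts wreaths containing both sets. By the above observation it vanishes unless $B=B'$ or $B\cap B'=\emptyset$, and in each of those cases it reduces to a standard multinomial count of partitions of the remaining elements into blocks of size $k$. This yields $\frac{(n-k)!}{(k!)^{n/k-1}(n/k-1)!}$ when $B=B'$ and $\frac{(n-2k)!}{(k!)^{n/k-2}(n/k-2)!}$ when $B\cap B'=\emptyset$, matching exactly the $j=l$ and $j=0$ values in the statement.

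Next I will expand $b_{l,j}=\langle w_A, w_T\rangle$ for $A=[l]_n$ and $T=\{l-j+1,\ldots, l-j+k\}$ using $w_A=\sum_{A\subseteq B,\,|B|=k} w_B$ and split into three cases. When $j=0$, the set $T$ is disjoint from $A$, so a $k$-set $B\supseteq A$ is disjoint from $T$ iff its remaining $k-l$ elements come from $\groundset{n}\setminus(A\cup T)$, a pool of size $n-k-l$, producing the binomial factor $\binom{n-k-l}{k-l}$. When $0<j<l$, every $B\supseteq A$ shares at least the $j\geq 1$ elements of $A\cap T$ with $T$, yet $A\not\subseteq T$ forbids $B=T$; hence no $B$ contributes and $b_{l,j}=0$. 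When $j=l$, we have $A\subseteq T$; any $B\supseteq A$ with $B\neq T$ still satisfies $|B\cap T|\geq l\geq 2$, ruling out disjointness, so only $B=T$ contributes and $b_{l,l}$ equals the self-inner-product computed in the previous step.

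Finally, substituting into \Cref{le:lambdaformula}, only the $j=0$ and $j=l$ summands survive, and combined with $(-1)^0=\binom{l}{0}=\binom{l}{l}=1$ the two surviving terms deliver the announced closed form for $\lambda_l$. I do not anticipate serious obstacles; the one delicate point is that the hypothesis $l\geq 2$ is used precisely to force $|B\cap T|\geq l\geq 2$ to be incompatible with disjointness in the $j=l$ analysis, so that the cross-terms there vanish cleanly.
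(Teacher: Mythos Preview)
Your proposal is correct and follows essentially the same argument as the paper: both interpret $b_{l,j}=\langle w_A,w_T\rangle=\sum_{A\subseteq S,\,|S|=k}\langle w_S,w_T\rangle$ and use that, for $k\mid n$, two $k$-sets lie in a common wreath only if equal or disjoint, then count the relevant partitions and plug into \Cref{le:lambdaformula}. One minor quibble: in your $j=l$ step you only need $|B\cap T|\geq l\geq 1$ to rule out disjointness, so the hypothesis $l\geq 2$ is not actually what makes that case work.
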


\begin{proof}
    Let $A = [l]_n$ and $T = \{ l-j+1,l-j+2,\ldots, l-j+k\}$. Recall that $b_{l,j}$ is the inner product of vectors $w_{A}$ and $w_{T}$. Thus it counts pairs $(\fly,S)$, where $\fly$ is a wreath which contains sets $T$ and $S$ (of size $k$) with $A\subseteq S$.
    
    Note that the sets of size $k$ within a wreath are disjoint. Therefore, if $0<j<l$, and hence $|A \cap T|=j>0$ and $|A \setminus T|=l-j>0$, there are no such pairs $(\fly,S)$. We thus have $b_{l,j}=0$ whenever $0<j<l$.

    Assume now $j=l$, that is, $A \subset T$. In this case, we have $S=T$, and therefore $b_{l,l}$ counts the number of wreaths which contain $T$. This is computed in the proof of \Cref{le:lambda1}: as there are $\frac{n (n-k)!k!}{k}$ permutations $\pi\in S_n$ such that $T\in \fly_{\pi}$ and the stabiliser of $\fly_{\id}$ in $S_n$ has size $(k!)^{n/k}(\frac{n}{k})!$, there are $\frac{(n-k)!}{(k!)^{n/k-1}(\frac{n}{k}-1)!}$ wreaths which contain $T$.

    Finally, assume $j=0$. Here, we have $b_{l,0}=\sum_{A\subset S} q_{S,T}$ where $q_{S,T}$ is the number of wreaths which contain $S$ and $T$. If $S$ and $T$ are not disjoint, then $q_{S,T}=0$. If $S$ and $T$ are disjoint, as there are $\frac{n(n-k) (n-2k)!(k!)^2}{k^2}$ permutations $\pi\in S_n$ such that $S,T\in \fly_{\pi}$, we get $q_{S,T}=\frac{(n-2k)!}{(k!)^{n/k-2}(\frac{n}{k}-2)!}$. The formula for $b_{l,0}$ follows from noting that there are $\binom{n-k-l}{k-l}$ ways to choose elements of $S\setminus A$ from $\groundset{n} \setminus \left(A \cup T\right)$.
    
    The value of $\lambda_l$ comes from \Cref{le:lambdaformula}.
\end{proof}

For computing $b_{l,j}$ and the eigenvalues of $M(n,k)$ when $k \nmid n$, the following identity of binomial coefficients is useful.

\begin{lemma}\label{le:binomial}
    Let $\alpha, \beta$ and $\gamma$ be three non-negative integers. Then
    \[
    \sum_{x+y=\gamma} \binom{x}{\alpha}\binom{y}{\beta} = \binom{\gamma+1}{\alpha+\beta+1}. 
    \]
\end{lemma}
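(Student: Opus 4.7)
The identity is a classical convolution (essentially a restatement of Vandermonde's identity), and the cleanest approach is a direct combinatorial double-counting of $(\alpha+\beta+1)$-subsets of a ground set of size $\gamma+1$.

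The plan is to interpret both sides as counting $(\alpha+\beta+1)$-element subsets of $\{0,1,\ldots,\gamma\}$. The right-hand side $\binom{\gamma+1}{\alpha+\beta+1}$ does this directly. For the left-hand side, I would classify each such subset $S$ by the value $x$ of its $(\alpha+1)$-th smallest element: once $x$ is fixed, one must choose the $\alpha$ smaller elements of $S$ from $\{0,1,\ldots,x-1\}$ in $\binom{x}{\alpha}$ ways, and the remaining $\beta$ larger elements from $\{x+1,x+2,\ldots,\gamma\}$ in $\binom{\gamma-x}{\beta}$ ways. Summing over $x$ and reindexing $y=\gamma-x$ (so that $x+y=\gamma$) yields exactly $\sum_{x+y=\gamma}\binom{x}{\alpha}\binom{y}{\beta}$.

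An alternative I would mention (or use as a sanity check) is induction on $\gamma$ using Pascal's rule $\binom{y}{\beta}=\binom{y-1}{\beta}+\binom{y-1}{\beta-1}$: splitting the sum according to this identity reduces the case $\gamma$ to two instances with $\gamma-1$, which combine via Pascal's rule on the right-hand side to $\binom{\gamma+1}{\alpha+\beta+1}$. The base case $\gamma=0$ is trivial (both sides equal $1$ when $\alpha=\beta=0$ and vanish otherwise).

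There is no real obstacle here; the only points to be careful about are the boundary conventions (terms with $x<\alpha$ or $y<\beta$ vanish automatically because $\binom{x}{\alpha}=0$ in that range), and making explicit that the sum is over all ordered pairs $(x,y)$ of non-negative integers with $x+y=\gamma$, consistent with how the identity is invoked in the computation of $b_{l,j}$ in the $k\nmid n$ case. I would prefer the combinatorial proof for brevity and transparency.
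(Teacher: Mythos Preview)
Your proposal is correct and is essentially the paper's own argument: the paper phrases it as placing $\alpha$ wolves, $\beta$ sheep and a fence between them on $\gamma+1$ colinear spots, which is exactly your double count with the fence playing the role of the $(\alpha+1)$-th smallest element of the subset. The only difference is packaging; your explicit handling of the boundary conventions and the induction alternative are fine but unnecessary for the paper's purposes.
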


\begin{proof}
    Both sides count the number of ways to place $\alpha$ wolves, $\beta$ sheep and a fence between them on $\gamma + 1$ colinear spots. 
\end{proof}

We now present the values of the coefficients $b_{l,j}$ for $n$ and $k$ coprime. Using similar computations one can determine the coefficients even when $n$ and $k$ are not coprime and $k\nmid n$, however, the formulas are rather complicated; see \Cref{ap:coefficients}.

\begin{Prop}\label{pr:bcoeficients}
    Let $2\leq l\leq k$ and $0\leq j\leq l$ be integers. Provided that $n$ and $k$ are coprime, we have
    \begin{align*}
        b_{l,j} =
        \begin{cases}
        \frac{1}{2}l! k! (n-k-l)! \left( 2\binom{k}{l+1} + (n-2k+1) \binom{k}{l}\right) & \textnormal{if } j=0,\\
        j! (l-j)! (k-j)! (n-k-l+j)! \binom{k+1}{l+1} & \textnormal{if } 0<j<l,\\
        \frac{1}{2}l! (k-l)! (n-k)! \left( 2\binom{k}{l+1} + \binom{k}{l}\right) & \textnormal{if } j=l.\\
        \end{cases}
    \end{align*}
\end{Prop}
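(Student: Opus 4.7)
The plan is to compute $b_{l,j}$ by direct enumeration. Take $A=[l]_n$ and $T=\{l-j+1,l-j+2,\ldots,l-j+k\}$, so that $|A\cap T|=j$. Since $\gcd(n,k)=1$, each wreath $\fly_\pi$ is precisely the family $\{I_s^\pi:s\in\groundset{n}\}$ of the $n$ distinct cyclic $k$-intervals $I_s^\pi:=\{\pi(s),\pi(s+1),\ldots,\pi(s+k-1)\}$, and the stabiliser of any wreath in $S_n$ has size $2n$. Expanding $\langle w_A,w_T\rangle$ as a count of pairs $(\fly,S)$ with $T\in\fly$ and $A\subseteq S\in\fly$ of size $k$ and then lifting to permutations yields
\[
b_{l,j}=\frac{1}{2n}\cdot\#\{(\pi,s,s')\in S_n\times\groundset{n}^2:T=I_s^\pi,\ A\subseteq I_{s'}^\pi\}.
\]

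For fixed $(s,s')$ the number of admissible $\pi$ depends only on $m:=|I_s\cap I_{s'}|$. Splitting $A=(A\cap T)\sqcup(A\setminus T)$ into pieces of sizes $j$ and $l-j$, one builds $\pi$ in two stages: first a bijection $I_s\to T$ carrying some $j$-subset of $I_s\cap I_{s'}$ onto $A\cap T$ (contributing $\binom{m}{j}j!(k-j)!$), then an extension $\groundset{n}\setminus I_s\to\groundset{n}\setminus T$ sending some $(l-j)$-subset of $I_{s'}\setminus I_s$ onto $A\setminus T$ (contributing $\binom{k-m}{l-j}(l-j)!(n-k-l+j)!$). This gives
\[
N_m:=j!\,(l-j)!\,(k-j)!\,(n-k-l+j)!\,\binom{m}{j}\binom{k-m}{l-j},
\]
with the convention $N_m=0$ when $m<j$ or $k-m<l-j$. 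Separately, since $k<\frac{n}{2}$, a direct check on cyclic distances shows that the number of pairs $(s,s')\in\groundset{n}^2$ with $s\neq s'$ and $|I_s\cap I_{s'}|=m$ equals $n(n-2k+1)$ when $m=0$ and $2n$ when $m\in\{1,\ldots,k-1\}$; moreover, the diagonal $s=s'$ contributes $k!(n-k)!$ for each of the $n$ choices of $s$, but only when $j=l$.

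Assembling these contributions gives
\[
2n\,b_{l,j}=2n\sum_{m=0}^{k-1}N_m+n(n-2k-1)\,N_0+n\,[j=l]\,k!(n-k)!,
\]
to which \Cref{le:binomial} applies in the form $\sum_{m=0}^k N_m=j!(l-j)!(k-j)!(n-k-l+j)!\binom{k+1}{l+1}$. The three cases then separate by the behaviour of $N_0$, $N_k$, and the diagonal contribution: for $0<j<l$ all three vanish, and the middle formula of the proposition falls out immediately; for $j=0$ only $N_k$ vanishes, and combining the closed form of $\sum_{m=0}^{k-1}N_m$ with the leftover $(n-2k-1)N_0$ correction through $\binom{k+1}{l+1}=\binom{k}{l+1}+\binom{k}{l}$ produces $2\binom{k}{l+1}+(n-2k+1)\binom{k}{l}$; for $j=l$ only $N_0$ vanishes, the diagonal contribution partially cancels $-N_k$, and the same binomial identity gives $2\binom{k}{l+1}+\binom{k}{l}$.

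The main obstacle is the bookkeeping across the three boundary regimes. The combinatorial inputs---the size-$2n$ stabiliser of a wreath when $g=1$, the overlap multiplicity count, and a single use of \Cref{le:binomial}---are all routine, but some care is needed to correctly absorb the extra contributions at $m=0$ and at $s=s'$ into the clean factorised expressions appearing in the proposition.
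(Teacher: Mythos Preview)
Your proof is correct and complete, but it organises the count differently from the paper. The paper introduces the minimum span $d(\fly)$ of $A$ inside the wreath $\fly$ and the numbers $q_{d,j}$ of wreaths containing $T$ with $d(\fly)=d$; the three cases $j=0$, $0<j<l$, $j=l$ then require separate computations of $q_{d,j}$ before one sums over $d$ and applies \Cref{le:binomial}. You instead lift to triples $(\pi,s,s')$ and group by the overlap $m=|I_s\cap I_{s'}|$ of the two cyclic $k$-intervals. This buys a single uniform formula $N_m=j!(l-j)!(k-j)!(n-k-l+j)!\binom{m}{j}\binom{k-m}{l-j}$ valid for all $j$, together with the easy multiplicity count for pairs $(s,s')$; the three cases then emerge only from the vanishing or non-vanishing of $N_0$ and $N_k$. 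One application of \Cref{le:binomial} (the Vandermonde identity $\sum_m\binom{m}{j}\binom{k-m}{l-j}=\binom{k+1}{l+1}$) handles all cases at once. Your route is slightly more economical, though both arguments have the same essential content and rely on the same combinatorial inputs (the stabiliser of size $2n$ when $g=1$ and \Cref{le:binomial}).
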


\begin{proof}
    Let $A = [l]_n$ and $T = \{ l-j+1,l-j+2,\ldots, l-j+k\}$. Recall that $b_{l,j}$ is the inner product of vectors $w_{A}$ and $w_{T}$. Thus it counts pairs $(\fly,S)$, where $\fly$ is a wreath which contains sets $T$ and $S$ (of size $k$) with $A\subseteq S$.
    
    Let $d(\fly)$ be the least integer $d$ such that $A \subseteq \{ \pi(i+1), \pi(i+2), \dots, \pi(i+d) \}$ for some $i\in\groundset{n}$, where $\fly=\fly_{\pi}$. Informally, $d(\fly)$ denotes the least number of consecutive places on $\fly$ which contain $A$. Then for a given $\fly$, there are $k+1-d(\fly)$ choices for $S$ (provided $d(\fly)\leq k$, otherwise there are none). We further let $q_{d,j}$ be the number of wreaths $\fly$ which contain $T$ and have $d(\fly)=d$. We now distinguish the cases $j=0$, $j=l$ and $0<j<l$.

    Suppose firstly that $j=0$. We claim that $q_{d,0}=\frac{1}{2}l! k! (n-k-l)! (n-k-d+1)\binom{d-2}{l-2}$ for $d\leq k$.
    
    To show this, we count the permutations $\sigma \in S_n$ such that $\fly_{\sigma}$ contains $T$ and has $d(\fly_{\sigma})=d$, and then divide this number by $2n$ to account for rotations and reflections.
    Such a permutation $\sigma$ is uniquely given by the following data: the position of the first (clockwise) element of $T$, the (clockwise) number of elements between the last element of $T$ and the first element of $A$, the positions of the elements of $A$ within the $d$ consecutive elements, and the permutations of $T$, $A$ and the remaining elements.
    Vice versa, each valid choice of these parameters uniquely determines $\sigma$.
    
    The number of choices for the position of the first element of $T$ is $n$, for the number of elements between $T$ and $A$ it is $n-k-d+1$, for the placement of $A$ within $d$ elements it is $\binom{d-2}{l-2}$, and the numbers of the final three permutations are $k!$, $l!$ and $(n-k-l)!$, respectively.
    
    After putting all this together, we obtain the claimed formula for $q_{d,0}$. We compute
    \begin{align*}
    2b_{l,0} &= \sum_{d=l}^k (k+1-d) l! k! (n-k-l)!(n-k-d+1)\binom{d-2}{l-2}\\
    &=\sum_{d=l}^k l! k! (n-k-l)!\left( 2\binom{k+1-d}{2}\binom{d-2}{l-2} + (n-2k+1)\binom{k+1-d}{1}\binom{d-2}{l-2} \right)\\
    &=l! k! (n-k-l)! \left( 2\binom{k}{l+1} + (n-2k+1) \binom{k}{l}\right),  
    \end{align*}
    using \Cref{le:binomial} twice with $\alpha=2$ and $1$, $\beta = l-2$ and $\gamma = k-1$.

    If $j=l$, similarly to the case $j=0$, we find that $q_{d,l}=\frac{1}{2}l! (k-l)! (n-k)! (k-d+1)\binom{d-2}{l-2}$. Thus we obtain
    \begin{align*}
    2b_{l,l} &= \sum_{d=l}^k (k+1-d)^2 l! (k-l)! (n-k)!\binom{d-2}{l-2}\\
    &=\sum_{d=l}^k l! (k-l)! (n-k)!\left( 2\binom{k+1-d}{2}\binom{d-2}{l-2} + \binom{k+1-d}{1}\binom{d-2}{l-2} \right)\\
    &=l! (k-l)! (n-k)! \left( 2\binom{k}{l+1} +  \binom{k}{l}\right),  
    \end{align*}
    as desired.

    Finally, for  $0<j<l$, we claim that $q_{d,j}=j! (l-j)! (k-j)! (n-k-l+j)! \binom{d-1}{l-1}$.
    Once again, we do so by computing the number of permutations $\sigma \in S_n$ such that $\fly_{\sigma}$ contains $T$ and has $d(\fly_{\sigma})=d$, and divide this number by $2n$.
    
    This time, $\sigma$ is uniquely determined by the following data: the position of the first (clockwise) element of $T$, the positions of the elements of $A$ so that $d(\fly_{\sigma})=d$ and exactly $j$ of these positions coincide with the positions of $T$, and the permutations of $T \cap A$, $T\setminus A$, $A\setminus T$ and the remaining elements.
    Vice versa, each valid choice of these parameters uniquely determines $\sigma$.
    
    Once more, the number of choices for the position of the first element of $T$ is $n$. The numbers of the final four permutations are $j!$, $(k-j)!$, $(l-j)!$ and $(n-k-l+j)!$, respectively. 
    
    It remains to justify that the remaining term is $2\binom{d-1}{l-1}$. Let $D$ be the set of the $d$ consecutive elements.
    From the first and the last element of $T$, exactly one lies in $D$, which gives the factor of $2$ in $2\binom{d-1}{l-1}$.
    Without loss of generality, assume that it is the last element of $T$ that lies in $D$. Then the position of the first element of $A$ coincides with a position of $T$. Next $j-1$ elements of $A$ still lie in $T$, and the remaining $l-j$ elements of $A$ lie outside of $T$ so that the last one comes $d-1$ positions after the first one. We are now in the setting of \Cref{le:binomial} with $x=|T\cap D|-1$, $y=|D\setminus T|-1$, $\alpha=j-1$ and $\beta=l-j-1$, explaining the factor of $\binom{d-1}{l-1}$.

    After putting all this together, we obtain the claimed formula for $q_{d,j}$. We compute

    \begin{align*}
    b_{l,j} &= \sum_{d=l}^k (k+1-d) j! (l-j)! (k-j)! (n-k-l+j)! \binom{d-1}{l-1}\\
    &=\sum_{d=l}^k j! (l-j)! (k-j)! (n-k-l+j)! \binom{k+1-d}{1}\binom{d-1}{l-1}\\
    &= j! (l-j)! (k-j)! (n-k-l+j)! \binom{k+1}{l+1},  
    \end{align*}
    which finishes the proof.
\end{proof}

Combination of the last two results yields the desired formula for eigenvalues of $M$.

\begin{Cor}\label{co:explicitlambda}
    Let $2\leq l\leq k$ be integers such that $k$ and $n$ are coprime. Then 
    \[
    \lambda_l = (-1)^l \frac{1}{2}l! k! (n-k-l)! (n-2k-1)\binom{k}{l} -\frac{1}{2}k! (n-k)! + l! \binom{k+1}{l+1}\sum_{j=0}^l (-1)^{l-j} (k-j)!(n-k-l+j)!.
    \]
\end{Cor}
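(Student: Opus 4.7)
The plan is to substitute the formulas for $b_{l,j}$ from \Cref{pr:bcoeficients} into the identity $\lambda_l = \sum_{j=0}^l (-1)^{l-j}\binom{l}{j} b_{l,j}$ of \Cref{le:lambdaformula}, and then massage the resulting expression into the stated closed form. Since $b_{l,j}$ is defined by three different formulas depending on whether $j=0$, $j=l$, or $0<j<l$, I would split the sum accordingly and treat each range in turn.

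For the interior range $0<j<l$, using the identity $\binom{l}{j} j!(l-j)! = l!$, the contribution to $\lambda_l$ collapses to $l!\binom{k+1}{l+1}\sum_{j=1}^{l-1}(-1)^{l-j}(k-j)!(n-k-l+j)!$. The target formula, however, contains a sum over the full range $j=0,\dots,l$, so the next step is to complete the sum by adding the missing $j=0$ and $j=l$ summands, while compensating by subtracting $l!\binom{k+1}{l+1}\bigl[(-1)^l k!(n-k-l)! + (k-l)!(n-k)!\bigr]$.

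Next I would add the genuine boundary contributions coming from $j=0$ and $j=l$ in \Cref{pr:bcoeficients}, and combine them with the compensating terms above. The key algebraic move is Pascal's identity $\binom{k+1}{l+1} = \binom{k}{l+1} + \binom{k}{l}$: expanding the compensating terms through this splitting causes all $\binom{k}{l+1}$ contributions to cancel exactly against the $2\binom{k}{l+1}$ pieces of the boundary formulas, leaving only expressions involving $\binom{k}{l}$ together with the stray $\frac{1}{2}k!(n-k)!$ from the $j=l$ boundary.

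What remains is pure arithmetic: the surviving $\binom{k}{l}$ coefficients combine via $\tfrac{n-2k+1}{2} - 1 = \tfrac{n-2k-1}{2}$ to produce the first term $(-1)^l\tfrac{1}{2}l!k!(n-k-l)!(n-2k-1)\binom{k}{l}$, while $\tfrac{1}{2}k!(n-k)! - k!(n-k)! = -\tfrac{1}{2}k!(n-k)!$ yields the second term. Together with the extended sum, this reproduces the claimed formula. There is no genuine obstacle here — the argument is essentially bookkeeping, and the main care is making sure each cancellation lands in the correct place.
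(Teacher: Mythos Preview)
Your proposal is correct and is essentially the same computation as the paper's proof: both substitute the values of $b_{l,j}$ from \Cref{pr:bcoeficients} into \Cref{le:lambdaformula} and use Pascal's identity $\binom{k+1}{l+1}=\binom{k}{l+1}+\binom{k}{l}$ to absorb the boundary terms $b_{l,0}$ and $b_{l,l}$ into the main sum. The only difference is organisational---the paper rewrites $b_{l,0}$ and $b_{l,l}$ up front so that each already contains a $j=0$ (resp.\ $j=l$) summand of the desired form, whereas you complete the interior sum first and then compensate---but the cancellations are identical.
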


\begin{proof}
    This is a combination of \Cref{le:lambdaformula} and \Cref{pr:bcoeficients} after we rewrite
    \begin{align*}b_{l,0}&=\frac{1}{2}l! k! (n-k-l)! \left( 2\binom{k}{l+1} + (n-2k+1) \binom{k}{l}\right)\\&=\frac{1}{2}l! k! (n-k-l)! (n-2k-1)\binom{k}{l} + l! k! (n-k-l)! \binom{k+1}{l+1}
    \end{align*}
    and \begin{align*}b_{l,l}&=\frac{1}{2}l! (k-l)! (n-k)! \left( 2\binom{k}{l+1} + \binom{k}{l}\right)\\&=l! (k-l)! (n-k)! \binom{k+1}{l+1} -\frac{1}{2}k! (n-k)!.
    \end{align*}
\end{proof}

\Cref{thm:matrixresults} follows.

\begin{proof}[Proof of \Cref{thm:matrixresults}]
    This is \Cref{le:lambda1}, \Cref{pr:multiplicities} and \Cref{co:explicitlambda}.
\end{proof}

We finish this section by describing some properties of $\lambda_l$ when $2\leq l \leq k$ for $k$ and $n$ coprime. First, we present explicit formulas for $l=2$ and $l=3$.

\begin{example}\label{ex:evalues}
Assume that $k$ and $n$ are coprime. Then for $k\geq 2$ we have
\[\lambda_2 = \frac{1}{6}k!(n-k-2)!(n-2)\left( (2k-1)n - (3k^2-k-1) \right)\]
and for $k\geq 3$
\[\lambda_3 = \frac{1}{4}k!(n-k-3)!(n-3)(n-2k)\left( (k-1)n - (2k^2-2k-2) \right).\]
\end{example}

While the formula for the eigenvalues of $M$ in \Cref{co:explicitlambda} does not seem to further simplify, there are a couple of interesting observations about the eigenvalues we can make.

\begin{Cor}\label{co:poly}
    Let $2\leq l\leq k$ be integers such that $k$ and $n$ are coprime. Then $\lambda_l=(n-k-l)! P_{k,l}(n)$, where $P_{k,l}$ is a polynomial of degree $l$ with leading term $\frac{(2k-l+1)k!}{2(l+1)}$.
\end{Cor}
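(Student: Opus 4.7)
The plan is to start from the explicit formula for $\lambda_l$ given in \Cref{co:explicitlambda} and divide through by $(n-k-l)!$, showing that each of the three resulting summands is a polynomial in $n$. Then I would read off the coefficient of $n^l$ by identifying which summands contribute to the top degree.

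First I would rewrite $\lambda_l$ as a sum of three pieces:
\[
T_1 = (-1)^l \tfrac{1}{2}l! k! (n-k-l)! (n-2k-1)\binom{k}{l}, \quad T_2 = -\tfrac{1}{2}k! (n-k)!,
\]
\[
T_3 = l! \binom{k+1}{l+1}\sum_{j=0}^l (-1)^{l-j} (k-j)!(n-k-l+j)!.
\]
Dividing $T_1/(n-k-l)!$ gives a degree-$1$ polynomial in $n$, so it does not contribute to the coefficient of $n^l$ (which is what we care about for the leading term, since $l \geq 2$). For $T_2$, I would use the identity $(n-k)!/(n-k-l)! = (n-k)(n-k-1)\cdots(n-k-l+1)$, which is a polynomial of degree $l$ in $n$ with leading term $n^l$, so the contribution of $T_2/(n-k-l)!$ to the $n^l$ coefficient is $-\tfrac{1}{2}k!$. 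For $T_3$, observe that $(n-k-l+j)!/(n-k-l)!$ is a polynomial in $n$ of degree exactly $j$ with leading term $n^j$, so $T_3/(n-k-l)!$ is a polynomial of degree $l$ in $n$ whose $n^l$-coefficient comes only from $j=l$ and equals $l!\binom{k+1}{l+1}(k-l)!$.

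The key computation is then to combine these two contributions into the claimed leading term. Using $\binom{k+1}{l+1} = \tfrac{(k+1)!}{(l+1)!(k-l)!}$ I would simplify
\[
l!\binom{k+1}{l+1}(k-l)! - \tfrac{1}{2}k! = \tfrac{(k+1)!}{l+1} - \tfrac{k!}{2} = \tfrac{2(k+1)! - (l+1)k!}{2(l+1)} = \tfrac{(2k-l+1)k!}{2(l+1)},
\]
which is the asserted leading coefficient. To conclude that $P_{k,l}$ has degree exactly $l$ I would note that, by the assumption $l \leq k$, we have $2k - l + 1 \geq k + 1 > 0$, so the leading coefficient is strictly positive and in particular nonzero.

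There is essentially no serious obstacle; the result is a direct consequence of \Cref{co:explicitlambda} once one recognizes that $(n-k-l+j)!/(n-k-l)!$ is a falling-factorial polynomial in $n$ of degree $j$. The only mild pitfall is bookkeeping: one must be careful that the lower-degree contributions from $T_3$ (for $j<l$) and from $T_1$ and $T_2$ do not accidentally combine to cancel the leading term, which is precisely what the explicit simplification above verifies.
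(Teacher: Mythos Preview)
Your proof is correct and follows essentially the same approach as the paper: divide through by $(n-k-l)!$, identify each piece as a polynomial in $n$, and read off the $n^l$-coefficient. The only cosmetic difference is that the paper works directly from \Cref{pr:bcoeficients} and \Cref{le:lambdaformula}, where the single term $b_{l,l}$ supplies the whole leading coefficient $\tfrac{l!(k-l)!}{2}\bigl(2\binom{k}{l+1}+\binom{k}{l}\bigr)=\tfrac{(2k-l+1)k!}{2(l+1)}$; by starting instead from \Cref{co:explicitlambda} you split that contribution into your $T_2$ and the $j=l$ summand of $T_3$ and then recombine them, which is a small amount of extra algebra but entirely equivalent.
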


\begin{proof}
    From \Cref{pr:bcoeficients}, we see that $b_{l,j}=(n-k-l)! Q_{k,l,j}(n)$ where $Q_{k,l,j}$ is a polynomial of degree $1$ if $j=0$ and degree $j$ otherwise. Moreover, the leading term of $b_{l,l}$ is $\frac{l! (k-l)!}{2} \left( 2\binom{k}{l+1} + \binom{k}{l} \right) = \frac{(2k-l+1)k!}{2(l+1)}$ The result then follows from \Cref{le:lambdaformula}.
\end{proof}

We end the section by establishing some of the roots of $P_{k,l}$.

\begin{Prop}\label{pr:root}
Let $2\leq l\leq k$ be integers such that $k$ and $n$ are coprime.
    \begin{enumerate}[(i)] 
        \item One of the roots of $P_{k,l}$ is $l$.
        \item If $l$ is odd, then $2k$ is a root of $P_{k,l}$.
    \end{enumerate}
\end{Prop}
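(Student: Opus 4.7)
My plan is to evaluate the polynomial $P_{k,l}(n)=\lambda_l/(n-k-l)!$ directly at $n=l$ and at $n=2k$, using the explicit closed form from \Cref{co:explicitlambda} for part (i) and the decomposition $\lambda_l=\sum_{j=0}^l(-1)^{l-j}\binom{l}{j}b_{l,j}$ of \Cref{le:lambdaformula} together with the formulas of \Cref{pr:bcoeficients} for part (ii).

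For part (ii), the argument is a clean symmetry observation. Inspecting \Cref{pr:bcoeficients} at $n=2k$ one sees
\[
b_{l,j}(2k)=b_{l,l-j}(2k) \qquad\text{for all } 0\le j\le l.
\]
Indeed for $0<j<l$ the formula gives $j!(l-j)!(k-j)!(k-l+j)!\binom{k+1}{l+1}$, which is manifestly invariant under $j\leftrightarrow l-j$; for $j=0$ and $j=l$ both formulas collapse (using $(n-2k+1)|_{n=2k}=1$) to the common value $\frac{1}{2}l!k!(k-l)!\bigl(2\binom{k}{l+1}+\binom{k}{l}\bigr)$. Reindexing $j\mapsto l-j$ in the defining sum for $\lambda_l(2k)$ then yields
\[
\lambda_l(2k)=\sum_{j=0}^l(-1)^{l-j}\binom{l}{j}b_{l,j}(2k)=\sum_{j=0}^l(-1)^{j}\binom{l}{j}b_{l,j}(2k),
\]
and when $l$ is odd the identity $(-1)^{l-j}=-(-1)^j$ forces both sides to vanish, giving $P_{k,l}(2k)=0$.

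For part (i), I would substitute $n=l$ directly into the formula from \Cref{co:explicitlambda}. The falling factorial $(n-k)!/(n-k-l)!$ evaluated at $n=l$ becomes $(l-k)(l-k-1)\cdots(1-k)=(-1)^l(k-1)!/(k-l-1)!$, and the rising factorial $(n-k-l+j)!/(n-k-l)!$ becomes $(1-k)(2-k)\cdots(j-k)=(-1)^j(k-1)!/(k-j-1)!$ (with both vanishing at the $l=k$ or $j=k$ boundary, consistently with the falling/rising factorial conventions). The signs combine to an overall $(-1)^l$, and the inner sum collapses via $(k-j)!/(k-j-1)!=k-j$ to $\sum_{j=0}^l(k-j)=(l+1)(2k-l)/2$. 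Using the simplifications $l!k!\binom{k}{l}=(k!)^2/(k-l)!$ and $l!(l+1)\binom{k+1}{l+1}=(k+1)!/(k-l)!$ and clearing the common denominator $(k-l)!$, the entire expression reduces to a nonzero multiple of
\[
k(l-2k-1)-(k-l)+(k+1)(2k-l),
\]
and one checks by direct expansion that this equals $0$, giving $P_{k,l}(l)=0$.

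The argument for (ii) is essentially painless once the pairing $b_{l,j}(2k)=b_{l,l-j}(2k)$ is noticed; the main obstacle lies in (i), where the bookkeeping of signs and factorial ratios has to be handled carefully, especially at the edge $l=k$, where several intermediate quantities formally involve $(-1)!$ but the falling/rising factorial conventions keep every step well defined and in agreement with the generic case.
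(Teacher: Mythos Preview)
Your proof is correct and follows essentially the same approach as the paper: part (i) is the same direct substitution of $n=l$ into the formula from \Cref{co:explicitlambda} (interpreting the factorial ratios as falling factorials), arriving at the identical vanishing linear combination $k(l-2k-1)-(k-l)+(k+1)(2k-l)=0$; part (ii) rests on the same $j\leftrightarrow l-j$ symmetry at $n=2k$, which the paper applies after first passing through \Cref{co:explicitlambda}, whereas you apply it a step earlier to the $b_{l,j}$ themselves---a slightly cleaner packaging of the same computation.
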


\begin{proof}
    Using \Cref{co:explicitlambda} we compute that $P_{k,l}(l)$ equals
    \begin{align*}
        P_{k,l}(l) &= (-1)^l \frac{1}{2}l! k! (l-2k-1)\binom{k}{l} -\frac{1}{2}k! (l-k)(l-1-k)\dots (1-k)\\
        &+ l! \binom{k+1}{l+1}\sum_{j=0}^l (-1)^{l-j} (k-j)!(j-k)(j-1-k)\dots (1-k)\\
        &= (-1)^l \frac{1}{2}l! k! (l-2k-1)\binom{k}{l} - (-1)^l\frac{1}{2}k!l! \binom{k-1}{l}+ (-1)^l l!(k-1)! \binom{k+1}{l+1}\sum_{j=0}^l (k-j)\\
        &= (-1)^l\frac{1}{2}(k-1)!l! \left( k(l-2k-1)\binom{k}{l} - k\binom{k-1}{l} + (2k-l)(l+1)\binom{k+1}{l+1} \right)\\
        &=(-1)^l\frac{1}{2}(k-1)!l!\binom{k}{l} \left( k(l-2k-1) - (k-l) + (2k-l)(k+1)\right)\\
        &=(-1)^l\frac{1}{2}(k-1)!l!\binom{k}{l} \left( kl-2k^2-k - k+l + 2k^2-kl +2k -l\right)\\
        &=0,
    \end{align*}
    thus (i) is proven.

    Since $(n-k-l)!$ is defined (and nonzero) for $n=2k$, we can compute that $\lambda_l$ becomes $0$ when we let $n=2k$ in \Cref{co:explicitlambda} (for $l$ odd) to establish (ii). The substitution yields
    
    \begin{align*}
    P_{k,l}(2k)&=(-1)^{l+1} \frac{1}{2}l! k! (k-l)!\binom{k}{l} -\frac{1}{2}(k!)^2 + l! \binom{k+1}{l+1}\sum_{j=0}^l (-1)^{l-j} (k-j)!(k-l+j)!\\
    &=l! \binom{k+1}{l+1}\sum_{j=0}^l (-1)^{l-j} (k-j)!(k-l+j)!.    
    \end{align*}
    The change of variables $i=l-j$ changes the final sum to $$\sum_{i=0}^l (-1)^{i} (k-l+i)!(k-i)! = -\sum_{i=0}^l (-1)^{l-i} (k-i)!(k-l+i)!;$$ thus the sum is zero.
\end{proof}

\section{Kernel of $M$}\label{se:kernel}

Motivated by \Cref{prop:tfae} we would like to understand the kernel of $M$. Using \Cref{pr:multiplicities}, we know that its codimension is $\binom{n}{k} - n +1$. We can also list some elements in the kernel.

Let $a\leq n$ be an integer greater or equal to $2$. We write $\tau_i$ for transposition $(i\hquad i+1)$ and let $x_a = e_{\fly_{\id}} - e_{\fly_{\tau_1}} - e_{\fly_{\tau_{a+1}}} + e_{\fly_{\tau_1 \tau_{a+1}}}$. For any wreath $\fly$ we further define $y_{a,\fly} = \sum_{\sigma \in S_a} \sgn(\sigma) e_{\sigma\fly}$. We can check that vectors $x_a$ and $y_{a,\fly}$ lie in the kernel of $M$ for suitable $a$.

\begin{lemma}\label{le:transpose two}
    Let $a\leq \frac{n}{2}$ be a positive integer distinct from $1$ and $k$. Then $x_a$ lies in the kernel of $M$.
\end{lemma}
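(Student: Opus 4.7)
The plan is to show that $x_a$ is orthogonal to every $w_T$, which by the decomposition $M=\sum_{T\in\groundsubs{n}{k}} w_T w_T^T$ established in \Cref{se:matrix} is equivalent to $Mx_a=0$. Using $\langle w_T,e_\fly\rangle=\mathbbm{1}[T\in\fly]$ together with the identity $T\in\tau\fly_{\id}\Leftrightarrow\tau^{-1}T\in\fly_{\id}$ and the fact that $\tau_1$ and $\tau_{a+1}$ are involutions, the orthogonality condition $\langle x_a,w_T\rangle=0$ rewrites as
\[
\mathbbm{1}[T\in\fly_{\id}]-\mathbbm{1}[\tau_1 T\in\fly_{\id}]-\mathbbm{1}[\tau_{a+1}T\in\fly_{\id}]+\mathbbm{1}[\tau_1\tau_{a+1}T\in\fly_{\id}]=0.
\]

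The first observation is that $a\geq 2$ (combined with $a+2\leq n$, which follows from $a\leq n/2$) makes the supports $\{1,2\}$ and $\{a+1,a+2\}$ of $\tau_1$ and $\tau_{a+1}$ disjoint. Hence, whenever $|T\cap\{1,2\}|\in\{0,2\}$ we have $\tau_1 T=T$ and also $\tau_1\tau_{a+1}T=\tau_{a+1}T$, so the four-term sum cancels in pairs; the same reasoning handles $|T\cap\{a+1,a+2\}|\in\{0,2\}$. This reduces the problem to the generic case $|T\cap\{1,2\}|=|T\cap\{a+1,a+2\}|=1$, in which the four sets appearing in the sum are distinct.

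The main step is to show that in this generic case none of the four sets lies in $\fly_{\id}$, so the sum is trivially zero. Every member of $\fly_{\id}$ is a cyclic interval $\{s,s+1,\ldots,s+k-1\}$ in $\groundset{n}$, uniquely characterised by its \emph{start} $s$ (the element whose predecessor lies outside) and its \emph{end} $s+k-1$ (whose successor lies outside). For each of the four candidate sets, the prescribed intersection pattern with $\{1,2\}$ and with $\{a+1,a+2\}$ forces two specified elements to serve as the start or end of the arc. Going through the four sub-cases, this yields in turn the equations $1\equiv a+1\pmod n$ (giving $a\equiv 0$), $\text{arc}=\{2,\ldots,a+1\}$ of size $k$ (giving $a=k$), $\text{arc}=\{a+2,\ldots,n,1\}$ of size $k$ (giving $a=n-k$), and $2\equiv a+2\pmod n$ (giving $a\equiv 0$). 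The hypotheses $2\leq a$, $a\neq k$, and $a\leq n/2<n-k$ (the last using $k<n/2$) exclude every one of these possibilities.

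The argument is elementary; the only delicate points are correctly handling cyclic wrap-around when identifying the forced arc in the third sub-case, and noticing that the three hypotheses on $a$ correspond precisely to the three forced values $0$, $k$, $n-k$ that need to be excluded. The condition $a\neq 1$ is what allows the disjoint-support reduction in the second paragraph.
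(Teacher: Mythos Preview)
Your proof is correct and follows the same approach as the paper: reduce to showing $\langle w_T,x_a\rangle=0$, dispose of the cases where $T$ meets $\{1,2\}$ or $\{a+1,a+2\}$ in an even number of points by a sign-cancellation, and in the remaining case argue that none of the four sets $hT$ can belong to $\fly_{\id}$. The paper compresses this last step into the single clause ``since $a\neq k$, there is no $h\in H$ such that $T\in\fly_h$''; your start/end analysis of cyclic intervals is exactly the verification that this clause hides, and it correctly identifies the three obstructions $a\in\{0,k,n-k\}$ ruled out by the hypotheses (the last via the standing assumption $k<n/2$).
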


\begin{proof}
    We need to show that for any set $T\in \groundsubs{n}{k}$, the vector $w_T$ is orthogonal to $x_a$. Writing $H$ for the subgroup of $S_n$ generated by transpositions $\tau_1$ and $\tau_{a+1}$, we can write $\langle w_T, x_a\rangle = \sum_{\substack{h\in H \\ T\in \fly_{h}}} \sgn(h)$. If $T$ contains an even number of elements of $\{ 1,2\}$ or an even number of elements of $\{ a+1, a+2\}$, then the sum vanishes as the signs cancel out. Otherwise, since $a\neq k$, there is no $h\in H$ such that $T\in \fly_{h}$, and thus we also get zero. 
\end{proof}

For vectors $y_{a,\fly}$ we use a stronger statement which provides a way to generate vectors in the kernel of $M$. To state it, we need some additional terminology. A \textit{linear character} of a group $H$ is a homomorphism $\rho: H\to \C$. We denote the restriction of $\rho$ to a subgroup $K$ of $H$ by $\rho\big\downarrow_K^H$ and the \textit{trivial character} which sends all group elements of $H$ to $1$ by $\mathbf{1}_H$. The well-known \emph{orthogonality of characters} implies that if $\rho$ and $\eta$ are distinct linear characters of $H$, then $\sum_{h\in H} \rho(h)\overline{\eta(h)} = 0$, where the bar denotes the complex conjugate.

Additionally, for subset $T\in\groundsubs{n}{k}$, we write $S_{n,T}$ for the stabiliser of $T$ in $S_n$ (that is, the subgroup of $S_n$ consisting of permutations $\pi$ such that $\pi(T)=T$).

\begin{lemma}\label{le:vectors in kernel}
    Let $H$ be a subgroup of $S_n$. Suppose that $\rho$ is a linear character of $H$ such that for any subset $T\in \groundsubs{n}{k}$ we have $\rho\big\downarrow_{H\cap S_{n,T}}^{H}\neq \mathbf{1}_{H\cap S_{n,T}}$. Then for any wreath $\fly$, the vector $\sum_{h\in H} \rho(h) e_{h\fly}$ lies in the kernel of $M$.
\end{lemma}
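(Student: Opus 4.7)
The plan is to exploit the decomposition $M=\sum_{T\in\groundsubs{n}{k}} M_T = \sum_T w_T w_T^T$ noted after \Cref{pr:positive}. This tells us that a vector $v\in V$ lies in $\ker M$ if and only if $\langle w_T, v \rangle = 0$ for every $T\in\groundsubs{n}{k}$. So the entire task reduces to fixing an arbitrary $T\in\groundsubs{n}{k}$ and showing that
\[
\left\langle w_T,\ \sum_{h\in H}\rho(h)\,e_{h\fly} \right\rangle = 0.
\]

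Next I would expand this inner product. Since $\langle w_T, e_{h\fly}\rangle$ equals $1$ if $T\in h\fly$ and $0$ otherwise, and since $T\in h\fly$ exactly when $S:=h^{-1}T\in\fly$, the inner product can be reindexed as
\[
\sum_{h\in H,\ T\in h\fly}\rho(h) = \sum_{S\in\fly}\ \sum_{\substack{h\in H \\ hS=T}}\rho(h).
\]
The main step is to recognize the inner sum as a coset sum. If $\{h\in H: hS=T\}$ is empty, the contribution is $0$. Otherwise, picking any $h_0\in H$ with $h_0 S = T$, the condition $hS=T$ is equivalent to $h_0^{-1}h\in S_{n,S}$, so $\{h\in H: hS=T\} = h_0\cdot(H\cap S_{n,S})$ and
\[
\sum_{\substack{h\in H \\ hS=T}}\rho(h) = \rho(h_0)\sum_{k\in H\cap S_{n,S}}\rho(k).
\]

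Finally I would invoke the orthogonality of characters. Since $S\in\fly\subseteq\groundsubs{n}{k}$, the hypothesis applies with $T$ replaced by $S$, so $\rho\big\downarrow_{H\cap S_{n,S}}^H$ is not the trivial character of $H\cap S_{n,S}$. Orthogonality against $\mathbf{1}_{H\cap S_{n,S}}$ then forces $\sum_{k\in H\cap S_{n,S}}\rho(k)=0$, so every inner sum vanishes and consequently the whole inner product does too. The only subtle point is the bookkeeping: one must note that the hypothesis is universally quantified over $k$-subsets so that it can be applied to each $S\in\fly$ rather than only to the originally-fixed $T$; beyond that, the argument is a short chain of coset manipulation and character orthogonality, with no genuine obstacle.
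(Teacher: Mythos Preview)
Your proof is correct and follows essentially the same route as the paper: reduce to orthogonality against each $w_T$, rewrite the inner product as $\sum_{S\in\fly}\sum_{h\in H,\,hS=T}\rho(h)$, identify the inner sum as a coset sum, and kill it with character orthogonality. The only cosmetic difference is that you factor through the stabiliser $H\cap S_{n,S}$ (left coset $h_0\cdot(H\cap S_{n,S})$) whereas the paper factors through $H\cap S_{n,T}$ (right coset $(H\cap S_{n,T})\cdot h'$); your version then needs the observation you made that the hypothesis applies to every $k$-subset $S\in\fly$, while the paper's version invokes it directly for the fixed $T$.
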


\begin{proof}
    Let $T\in\groundsubs{n}{k}$. We need to show that $w_T$ is orthogonal to $\sum_{h\in H} \rho(h) e_{h\fly}$. As mentioned above, the condition $\rho\big\downarrow_{H\cap S_{n,T}}^{H}\neq \mathbf{1}_{H\cap S_{n,T}}$ implies $\sum_{h\in H\cap S_{n,T}} \rho(h)=0$. The inner product $\langle w_T, \sum_{h\in H}\rho(h)e_{h\fly}\rangle$ equals
    \[\sum_{\substack{h\in H \\ T\in h\fly}} \rho(h)= \sum_{S\in \fly} \sum_{\substack{h\in H \\ T= hS}} \rho(h).\]
    If there is no $h'\in H$ such that $T=h'S$, then $\sum_{\substack{h\in H \\ T= hS}} \rho(h)$ is zero. Otherwise, we use such $h'$ to rewrite $\sum_{\substack{h\in H \\ T= hS}} \rho(h)$ as
    \[\sum_{\substack{h\in H \\ hT = T}} \rho(hh') = \rho(h') \sum_{\substack{h\in H \\ hT = T}} \rho(h).\]
   As the stabiliser of $T$ in $H$ is $H\cap S_{n,T}$, the sum is also zero. This finished the proof. 
\end{proof}

For readers with further algebraic interests, we mention a possible reformulation of \Cref{le:vectors in kernel}.

\begin{remark}\label{re:Frobenius}
    By the Frobenius reciprocity, the condition $\rho\big\downarrow_{H\cap S_{n,T}}^{H}\neq \mathbf{1}_{H\cap S_{n,T}}$ can be replaced by: \\ $\rho$ is not an irreducible constituent of the permutation character $\left(\mathbf{1}_{H\cap S_{n,T}}\right)\big\uparrow_{H\cap S_{n,T}}^{H}$.
\end{remark}

We can now deduce that the vectors $y_{a,\fly}$ lie in the kernel of $M$ quite easily.

\begin{Cor}\label{cor:permute set}
    Let $a\leq n$ be a positive integer greater than $2$. If $\fly$ is a wreath, then $y_{a,\fly}$ is in the kernel of $M$.
\end{Cor}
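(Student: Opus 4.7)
The plan is to apply \Cref{le:vectors in kernel} with $H = S_a$, viewed as the subgroup of $S_n$ permuting $\{1, 2, \ldots, a\}$ and fixing everything else, and $\rho = \sgn$ the sign character of $S_a$. Once the hypothesis of that lemma is verified, the conclusion $y_{a,\fly}\in\ker M$ is immediate from the definition of $y_{a,\fly}$.

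The only thing to check is that $\sgn\big\downarrow_{S_a\cap S_{n,T}}^{S_a}\neq \mathbf{1}_{S_a\cap S_{n,T}}$ for every $T\in\groundsubs{n}{k}$. I would first identify the intersection explicitly: a permutation in $S_a$ stabilises $T$ if and only if it permutes $T\cap\{1,\ldots,a\}$ within itself and its complement $\{1,\ldots,a\}\setminus T$ within itself. Setting $p=|T\cap\{1,\ldots,a\}|$, this gives the natural isomorphism
\[
S_a\cap S_{n,T}\cong S_p\times S_{a-p}.
\]

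It then suffices to show this subgroup contains an odd permutation of $\{1,\ldots,a\}$, since then the restricted sign character takes the value $-1$ somewhere and is non-trivial. Since $a\geq 3$, we have $\max(p, a-p)\geq \lceil a/2\rceil \geq 2$, so at least one of the two factors contains a transposition, which is odd as an element of $S_a$. Thus the restriction is non-trivial, \Cref{le:vectors in kernel} applies, and $y_{a,\fly}\in\ker M$.

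There is no real obstacle here; the content of the corollary is precisely that the sign character on $S_a$ has the required non-triviality property once $a\geq 3$, which fails exactly in the degenerate case $a=2$ (where $p=1$ gives the trivial intersection $S_1\times S_1$), explaining the hypothesis $a>2$ in the statement.
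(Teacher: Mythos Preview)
Your proof is correct and follows essentially the same approach as the paper: apply \Cref{le:vectors in kernel} with $H=S_a$ and $\rho=\sgn$, and verify that $H\cap S_{n,T}$ contains a transposition whenever $a\geq 3$. Your argument is slightly more explicit in identifying $S_a\cap S_{n,T}\cong S_p\times S_{a-p}$, but the content is identical.
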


\begin{proof}
    We apply \Cref{le:vectors in kernel} with $H=S_a$ and $\rho = \sgn$. Since $a\geq 3$, any subgroup $S_{n,T}$ with $T\in \groundsubs{n}{k}$ contains a transposition from $H$ and hence $\rho\big\downarrow_{H\cap S_{n,T}}^{H}$ does not coincide with the trivial character of $H\cap S_{n,T}$. The conclusion follows from \Cref{le:vectors in kernel}. 
\end{proof}

\section*{Concluding remarks}
In 1991, Katona wrote about the wreath conjecture (``Sylvester's conjecture'' here refers to \Cref{thm:hypergraphdecomposition}):
``It seems to be hard to settle this conjecture.
Sylvester’s conjecture was earlier attacked by algebraic methods and an algebraic way of thinking. Baranyai’s brilliant
idea was to use matrices and flows in networks. This conjecture, however, seems to be too algebraic. One does not
expect to solve it without algebra. (Unless it is not true.)''

These words support the potential of the wreath matrix as a novel algebraic approach to the wreath conjecture. Moreover, the authors believe that the wreath matrix itself is an interesting object worth further investigation.

One particular property that the authors are curious about is the following.

\begin{question}\label{qu:distinct}
Are the eigenvalues $\lambda_1, \lambda_2, \ldots, \lambda_k$ from \Cref{thm:matrixresults} all distinct? 
\end{question}

\section*{Acknowledgemements}

The authors thank Béla Bollobás and Mark Wildon for helpful comments concerning the project and the manuscript. They also thank Miroslav Olšák for his help with generating decompositions of small families of sets $\groundsubs{n}{k}$ into disjoint wreaths, and Zsuzsanna Baran for additional comments on the manuscript.

The first author would like to acknowledge support by the EPSRC (Engineering and Physical Sciences Research Council), reference EP/V52024X/1, and by the Department of Pure Mathematics and Mathematical Statistics of the University of Cambridge. The second author would like to acknowledge support from Royal Holloway, University of London.

\bibliographystyle{abbrvnat}  
\bibliography{bibliography}

\appendix

\section{Coefficients $b_{l,j}$}\label{ap:coefficients}

For general value of the greatest common divisor $g$ of $n$ and $k$ when $k \nmid n$, in the spirit of \Cref{pr:bcoeficients} we compute that
\begin{align*}
b_{l,j} = \begin{cases}
    c_{l,0} \sum_{d=1}^{\frac{k}{g}} \left( \frac{n-k}{g} - d +1\right) \left( \frac{k}{g} - d +1\right) \left( \binom{dg}{l} -2\binom{dg-g}{l} + \binom{dg-2g}{l} \right) & \textnormal{if } j=0,\\
    c_{l,j} \sum_{d=1}^{\frac{k}{g}} \left( \frac{k}{g} - d +1\right) \sum_{\delta =1}^{d-1} \left( \binom{\delta g}{j} -\binom{\delta g-g}{j} \right)\left( \binom{(d-\delta)g}{l-j} -\binom{(d-\delta)g-g}{l-j} \right) & \textnormal{if } 0<j<l,\\
    c_{l,l} \sum_{d=1}^{\frac{k}{g}} \left( \frac{k}{g} - d +1\right)^2 \left( \binom{dg}{l} -2\binom{dg-g}{l} + \binom{dg-2g}{l} \right) & \textnormal{if } j=l.\\
    \end{cases}
\end{align*}
Here the coefficients $c_{l,j}$ are given by
\begin{align*}
c_{l,j} =
    \begin{cases}
        \frac{k! (n-k-l)! l!}{2(g!)^{\frac{n}{g}}} & \textnormal{if } j=0,\\
        \\
        \frac{j! (l-j)! (k-j)! (n-k-l+j)!}{(g!)^{\frac{n}{g}}} & \textnormal{if } 0<j<l,\\
        \\
        \frac{(n-k)!(k-l)! l!}{2(g!)^{\frac{n}{g}}} & \textnormal{if } j=l.\\
    \end{cases}
\end{align*}

  \par
  \medskip
  \begin{tabular}{@{}l@{}}%
    \textsc{DPMMS, University of Cambridge}\\
    \textsc{Wilberforce Road, Cambridge, CB3 0WA, United Kingdom}\\
    \textit{E-mail address}: \texttt{jp895@cam.ac.uk}
  \end{tabular}
  \par
  \medskip
  \begin{tabular}{@{}l@{}}
  \textsc{Department of Mathematics, Royal Holloway, University of London}\\
    \textsc{Egham Hill, Egham, Surrey TW20 0EX, United Kingdom}\\
    \textit{E-mail address}: \texttt{pkah149@live.rhul.ac.uk}
  \end{tabular}
\end{document}